\tikzset{join/.code=\tikzset{after node path={%
\ifx\tikzchainprevious\pgfutil@empty\else(\tikzchainprevious)%
edge[every join]#1(\tikzchaincurrent)\fi}}}
\tikzset{>=stealth',every on chain/.append style={join},         every join/.style={->}}
\newtheorem{theorem}{Theorem}[section]
\newtheorem{proposition}[theorem]{Proposition}
\newtheorem{lemma}[theorem]{Lemma}
\newtheorem{corollary}[theorem]{Corollary}
\newtheorem{conjecture}[theorem]{Conjecture}
\newtheorem{remark}[theorem]{Remark}
\newtheorem{Definition}[theorem]{Definition}
\title{Forks, Noodles and the Burau Representation\\ for $n=4$}
\author{A. Beridze and P. Traczyk}
\begin{document}

\maketitle

\begin{abstract} The reduced Burau representation is a natural
action of the braid group $B_n$ on the first homology group $H_1({\tilde{D}}_n;\mathbb{Z})$
of a suitable infinite cyclic covering space ${\tilde{D}}_n$ of the $n$--punctured disc $D_n$.  It is known that  the Burau representation is
faithful for $n\le 3$ and that it is not faithful for $n\ge 5$.
We use forks and noodles homological techniques and Bokut--Vesnin generators to analyze the problem for $n=4$.  We present a Conjecture implying faithfulness and a Lemma explaining the implication. We give some arguments suggesting why we expect the Conjecture to be true.  Also, we give some geometrically calculated examples and information about data gathered using a 
C\texttt{++} program.

\end{abstract}

\section{Introduction}

Let us recall the definition of the reduced Burau representation  in terms of the first homology group
$H_1({\tilde{D}}_4;\mathbb{Z})$ of a suitable infinite cyclic covering space ${\tilde{D}}_4$ 
of the $4$-punctured  disc $D_4$.
Let $D_4$ be the unit closed  disc
on the plane with center $(0,0)$ and four punctures at: 
$p_1=\left(-\frac{1}{2},\frac{1}{2}\right)$,
$\ p_2=\left(\frac{1}{2},\frac{1}{2}\right)$,
$p_3=\left(\frac{1}{2},-\frac{1}{2}\right)$,
$\ p_4=\left(-\frac{1}{2},-\frac{1}{2}\right)$
(see Figure~1).

\begin{figure}[ht]
 \centering
\resizebox{150pt}{!}{%
 \begin{tikzpicture}

\filldraw[color=black!60, fill=white, very thick](0,0) circle (2);

\filldraw [gray] (-2,0) circle (1.5pt); 
\filldraw [gray] (-2/3,2/3) circle (1pt);    
\filldraw [gray] (2/3,2/3) circle (1pt);    
\filldraw [gray] (2/3,-2/3) circle (1pt);    
\filldraw [gray] (-2/3,-2/3) circle (1pt);

\tikzstyle{every node}=[font=\large]
\node[right] at (-2, 0) {$p_0$};
\node[above] at (-2/3, 2/3) {$p_1$};
\node[above] at (2/3,2/3) {$p_2$};
\node[above] at (2/3,-2/3) {$p_3$};
\node[above] at (-2/3,-2/3) {$p_4$};

  \end{tikzpicture}
  }
\caption{The $4$--punctured disc with basepoint $p_0$ and puncture points: $p_1$, $p_2$, $p_3$, $p_4$}
\label{figure 1}
\end{figure}
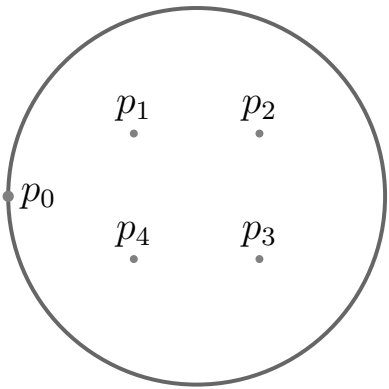

The braid group $B_4$ is the group of all equivalence classes of
orientation preserving homeomorphisms $\varphi :D_4\to D_4$ which
fix the boundary $\partial D_4$ pointwise, where equivalence
relation is isotopy  relative to $\partial D_4$. Let ${\pi }_1\left(D_4\right)$
be the fundamental group of the 4--punctured disc $D_4$ with respect to the basepoint $p_0=(-1,0)$. Consider the map $\varepsilon :{\pi }_1\left(D_4\right) \to \left\langle t\right\rangle $
which sends a loop $\gamma \in {\pi }_1\left(D_4\right)$ to $ t^{\left[\gamma \right]}$, 
where $\left[\gamma \right]$ is the winding number of $\gamma $ around
punctured points $p_1,p_2,p_3,p_4$ (meaning: the sum of the four winding numbers for individual points). 
Let $\pi :{\tilde{D}}_4\to D_4$ be the infinite cyclic 
covering space corresponding to the kernel 
$\ker\left(\varepsilon \right)$ of the map  $\varepsilon :{\pi }_1\left(D_4\right)\to \left\langle t\right\rangle $.
Let ${\tilde{p}}_0$ be any fixed basepoint which is a lift of the basepoint $p_0$. In this case 
$H_1({\tilde{D}}_4;\mathbb{Z})$ is free $\mathbb{Z}\left[t,t^{-1}\right]$--module of rank 3 (see [3]).  Let $\varphi :D_4\to D_4$ be a homeomorphism representing of an element $\sigma \in B_4$. 
It can be lifted to a map $\widetilde{\varphi }:{\tilde{D}}_4\to {\tilde{D}}_4$ which fixes 
the fiber over $p_0.$ Therefore it induces a $\mathbb{Z}\left[t,t^{-1}\right]$--module 
automorphism ${\widetilde{\varphi }}_*:H_1\left({\tilde{D}}_4;\mathbb{Z}\right)\to H_1\left( {\tilde{D}}_4;\mathbb{Z}\right)$. 
Consequently, the reduced Burau representation
$$\rho :B_4\to Aut\left(H_1\left({\tilde{D}}_4;\mathbb{Z}\right)\right)\leqno{(1.1) }$$ 
is given [3] by
$$\rho \left(\sigma \right)={\widetilde{\varphi }}_*,\ \ \forall \sigma \in B_4.\leqno{(1.2) }$$ 

It is known that the Burau representation is
faithful for $n\le 3$ [1], [2] and it is not faithful for $n\ge 5$ [2], [6], [7]. Therefore, the problem is open for $n=4$. 
In this paper, we use the Bokut-Vesnin generators $a,a^{-1},b,b^{-1}$ of a certain free subgroup of $B_4$ (see [4])
and a technique developed in [3], to prove the crucial lemma, which
gives the opportunity to decompose entries $\rho_{11}(a^{n} \sigma )$ and $\rho_{13}(a^{n} \sigma)$ of the Burau matrix $\rho(a^{n} \sigma)$ as a sum of three uniquely determined polynomials and the formula to calculate $\rho_{13}(a^{n+m} \sigma)$ and $\rho (a^{n+m} \sigma)$ polynomials using the given decomposition.   Besides, we formulate Conjecture 4.2, which implies that if a 
non--trivial braid $\sigma \in \ker \rho$ has a certain additional property, then there exists a sufficiently large $l_0$ with respect to the length of $\sigma$ (to be explained in Section~3, Corollary~3.2) and a sufficiently large $m_0$ such that for each $m>m_0$ and $l>l_0$ the difference of lowest degrees of
polynomials $\rho_{13}(a^{n+m} \sigma)$ and $\rho (a^{n+m} \sigma)$ is $-1$. We will present arguments and experimental data showing why we expect the conjecture to be true. Also, we will consider several examples calculated geometrically. We will show that the conjecture implies faithfulness of the Burau representation for $n=4$. 

\section{The Burau representation, Forks and Noodles}

The Burau representation for $n = 4$ was defined by (1.1) and (1.2).
On the other hand $H_1({\tilde{D}}_4;\mathbb{Z})$ is a free $\mathbb{Z}\left[t,t^{-1}\right]$--module of rank 3 and if we take a basis of it, then $Aut\left(H_1\left({\tilde{D}}_4;\mathbb{Z}\right)\right)$ can be identified with $GL\left(3,\ \mathbb{Z}\left[t,t^{-1}\right]\right)$. For this reason we will review the definition of the forks.

\begin{Definition}
A fork is an embedded oriented tree $F$ in the  disc $D$ with four vertices  $p_0,p_i,p_j$ and $z$, 
where $i\neq j, i,j \in\{1,2,3,4\}$ such that (see [3]):

\begin{enumerate}
\item  $F$ meets the puncture points only at $p_i$ and $p_j$;
\item  $F$ meets the boundary $\partial D_4$ only at $p_0$;
\item  All three edges of $F$ have $z$ as a common vertex.
\end{enumerate}
\end{Definition}

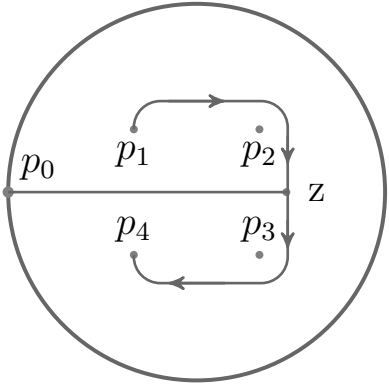
\begin{figure}[ht]
  \centering
\resizebox{150pt}{!}{%
  \begin{tikzpicture}

\filldraw[color=black!60, fill=white, very thick](0,0) circle (2);

\filldraw [gray] (-2,0) circle (1.5pt); 
\filldraw [gray] (-2/3,2/3) circle (1pt);    
\filldraw [gray] (2/3,2/3) circle (1pt);    
\filldraw [gray] (2/3,-2/3) circle (1pt);    
\filldraw [gray] (-2/3,-2/3) circle (1pt);
\filldraw [gray] (20/21,0) circle (1pt);

\tikzstyle{every node}=[font=\large]
\node[above right] at (-2, 0) {$p_0$};
\node[below] at (-2/3, 2/3) {$p_1$};
\node[below] at (2/3,2/3) {$p_2$};
\node[above] at (2/3,-2/3) {$p_3$};
\node[above] at (-2/3,-2/3) {$p_4$};
\node[right] at (21/20,0) {z};
\draw[color=black!60, thick] (-2,0) -- (2/3+0.3,0);
\draw[color=black!60, thick, rounded corners=8pt, -  ] (-2/3,2/3) -- (-2/3,2/3+0.3) - - (2/3+0.3,2/3+0.3) - - (2/3+0.3,-2/3-0.3) - - (-2/3,-2/3-0.3) - - (-2/3,-2/3);
\draw[color=black!60, thick,->] (-0.3,2/3+0.3) - - (0.3,2/3+0.3);
\draw[color=black!60, thick,->] (2/3+0.3,0.6) - - (2/3+0.3,0.3);
\draw[color=black!60, thick,->] (2/3+0.3,-0.3) - - (2/3+0.3,-0.6);
\draw[color=black!60, thick,->] (0.3,-2/3-0.3) - - (-0.3,-2/3-0.3);
  \end{tikzpicture}
  }
  \caption{The line form $p_0$ to $z$ is the handle and the curve from $p_1$ to $p_4$ is the tine $T(F)$ of the fork $F$}
  \label{figure 2}
\end{figure}

The edge of $F$ which contains $p_0$ is called the handle. The union of the other two edges is denoted by $T\left(F\right)$ and it is called tine of  $F$. Orient $T\left(F\right)$ so that the handle of  $F$ lies to the right of  $T\left(F\right)$ (see Figure~2) [3].

For a given fork $F$, let $h:I\to D_4$ be the handle of  $F$, viewed as a path in $D_4$ and take a lift $\tilde{h}:I\to {\tilde{D}}_4$ of $h$ so that $\tilde{h}\left(0\right)={\tilde{p}}_0$. Let $\tilde{T}\left(F\right)$ be the connected component of  ${\pi }^{-1}\left(T\left(F\right)\right)$ which contains the point $\tilde{h}\left(1\right)$. In this case any element of  $H_1\left({\tilde{D}}_4;\mathbb{Z}\right)$ can be viewed as a homology class of $\tilde{T}\left(F\right)$ and it is denoted by $F$ [3].

Standard fork  $F_i,\ \ i=1,2,3\ $ is the fork whose tine edge is the straight arc connecting the i-th and the (i+1)-st punctured points and whose handle has the form as in Figure~3.  It is known that if $F_1,{\ F}_2$ and $F_3$ are the corresponding homology classes, then they form a basis of $H_1\left({\tilde{D}}_4;\mathbb{Z}\right)$ (see [3]).

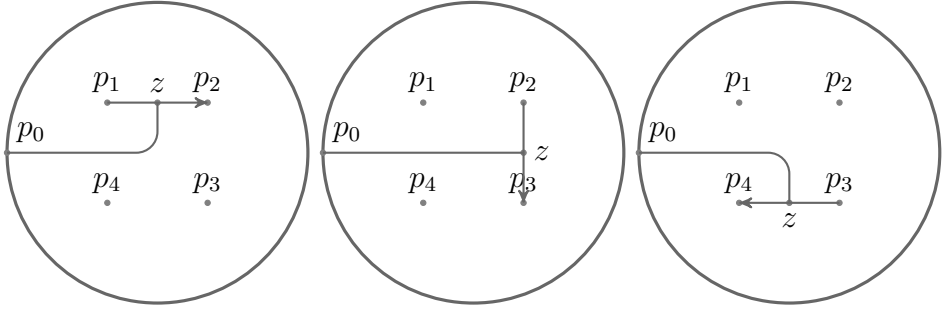
\begin{figure}[ht]
  \centering
  \begin{tikzpicture}

\filldraw[color=black!60, fill=white, very thick](-4.2,0) circle (2);

\filldraw [gray] (-2-4.2,0) circle (1pt); 
\filldraw [gray] (-2/3-4.2,2/3) circle (1pt);    
\filldraw [gray] (2/3-4.2,2/3) circle (1pt);    
\filldraw [gray] (2/3-4.2,-2/3) circle (1pt);    
\filldraw [gray] (-2/3-4.2,-2/3) circle (1pt);
\filldraw [gray] (-4.2,2/3) circle (1pt);

\tikzstyle{every node}=[font=\large]
\node[above right] at (-2-4.2, 0) {$p_0$};
\node[above] at (-2/3-4.2, 2/3) {$p_1$};
\node[above] at (2/3-4.2,2/3) {$p_2$};
\node[above] at (2/3-4.2,-2/3) {$p_3$};
\node[above] at (-2/3-4.2,-2/3) {$p_4$};
\node[above] at (-4.2,2/3) {$z$};

\draw[color=black!60, thick, rounded corners=8pt] (-2-4.2, 0) -- (-4.2,0) -- (-4.2,2/3);
\draw[color=black!60, thick][->] (-2/3-4.2, 2/3) -- (2/3-4.2,2/3);

\filldraw[color=black!60, fill=white, very thick](0,0) circle (2);

\filldraw [gray] (-2,0) circle (1pt); 
\filldraw [gray] (-2/3,2/3) circle (1pt);    
\filldraw [gray] (2/3,2/3) circle (1pt);    
\filldraw [gray] (2/3,-2/3) circle (1pt);    
\filldraw [gray] (-2/3,-2/3) circle (1pt);
\filldraw [gray] (2/3,0) circle (1pt);

\tikzstyle{every node}=[font=\large]
\node[above right] at (-2, 0) {$p_0$};
\node[above] at (-2/3, 2/3) {$p_1$};
\node[above] at (2/3,2/3) {$p_2$};
\node[above] at (2/3,-2/3) {$p_3$};
\node[above] at (-2/3,-2/3) {$p_4$};
\node[right] at (2/3,0) {$z$};

\draw[color=black!60, thick] (-2, 0) -- (2/3,0);
\draw[color=black!60, thick][<-] (2/3, -2/3) -- (2/3,2/3);

\filldraw[color=black!60, fill=white, very thick](4.2,0) circle (2);

\filldraw [gray] (-2+4.2,0) circle (1pt); 
\filldraw [gray] (-2/3+4.2,2/3) circle (1pt);    
\filldraw [gray] (2/3+4.2,2/3) circle (1pt);    
\filldraw [gray] (2/3+4.2,-2/3) circle (1pt);    
\filldraw [gray] (-2/3+4.2,-2/3) circle (1pt);
\filldraw [gray] (4.2,-2/3) circle (1pt);

\tikzstyle{every node}=[font=\large]
\node[above right] at (-2+4.2, 0) {$p_0$};
\node[above] at (-2/3+4.2, 2/3) {$p_1$};
\node[above] at (2/3+4.2,2/3) {$p_2$};
\node[above] at (2/3+4.2,-2/3) {$p_3$};
\node[above] at (-2/3+4.2,-2/3) {$p_4$};
\node[below] at (4.2,-2/3) {$z$};

\draw[color=black!60, thick, rounded corners=8pt] (-2+4.2, 0) -- (4.2,0)-- (4.2,-2/3);

\draw[color=black!60, thick] [<-] (-2/3+4.2, -2/3) -- (2/3+4.2,-2/3);

  \end{tikzpicture}
  \caption{Standard forks:  $F_1,~~F_2,~~F_3$}
  \label{figure 3}
\end{figure}

\noindent
Using the basis derived from $F_1,F_2,F_3$, any automorphism ${\widetilde{\varphi }}_*:H_1\left({\tilde{D}}_4;\mathbb{Z}\right)\to H_1\left({\tilde{D}}_4;\mathbb{Z}\right)$ can be viewed as a $3\times 3$ matrix with elements in the free $\mathbb{Z}\left[t,t^{-1}\right]$--module [3]. If $\varphi :D_4\to D_4$ is representing an element $\sigma \in B_4$,
then we need to write the matrix $\rho \left(\sigma \right)={\widetilde{\varphi }}_*$ in terms of homology (algebraic) intersection pairing
\[\left\langle -,-\right\rangle :H_1\left({\tilde{D}}_4;\mathbb{Z}\right)\times H_1\left({\tilde{D}}_4,\partial {\tilde{D}}_4;\mathbb{Z}\right)\to \mathbb{Z}\left[t,t^{-1}\right].\] 
For this aim we need to define the noodles which represent relative homology classes in $H_1\left({\tilde{D}}_4,\partial {\tilde{D}}_4;\mathbb{Z}\right)$. 
\begin{Definition}
A noodle is an embedded oriented arc in $D_4$, which begins at the base point $p_0$ and ends at some point of the boundary $\partial D_4$ [3]. 
\end{Definition}
For each $a\in H_1\left({\tilde{D}}_4;\mathbb{Z}\right)$ and $b\in H_1\left({\tilde{D}}_4,\partial {\tilde{D}}_4;\mathbb{Z} \right)$ we should take the corresponding fork $F$ and noodle $N$ and define the polynomial $\left\langle F,N\right\rangle \in \mathbb{Z}\left[t,t^{-1}\right]$. It does not depend on the choice of representatives of homology classes and so  
\[\left\langle -,-\right\rangle :H_1\left({\tilde{D}}_4;\mathbb{Z} \right)\times H_1\left({\tilde{D}}_4,\partial {\tilde{D}}_4;\mathbb{Z} \right)\to  \mathbb{Z} \left[t,t^{-1}\right]\] 
is well-defined [3]. The map defined by the above formula is called  the noodle--fork paring. Note that geometrically it can be computed in the following way: Let $F$ be a fork and $N$ be a noodle, such that $T\left(F\right)$ intersects $N$ transversely. Let $z_1,z_2,\ \dots ,z_n$ be the intersection points. For each point $z_i$ let ${\varepsilon }_i$ be the sign of the intersection between $T\left(F\right)$ and $N$ at $z_i$ (the intersection is positive if going from tine to noodle according to the chosen directions means turning left)  and $e_i=\left[{\gamma }_i\right]$ be the winding number of the loop ${\gamma }_i$  around the puncture points $p_1,p_2,p_3,p_4$, where  ${\gamma }_i$ is the composition of three paths $h$, $t_i$ and $n_i$:

\begin{enumerate}
\item $h$ is a path from $p_0$ to $z$ along the handle of  $F$ (see Figure 4a);

\item $t_i$ is a path from $z$ to $z_i$ along the tine  $T(F)$ (see Figure 4b);

\item $n_i$ is a path from $z_i$ to  $p_0$ along the noodle  $N$ (see Figure 4c).
\end{enumerate}

\begin{figure}[ht]
  \centering
  \begin{tikzpicture}

\filldraw[color=black!60, fill=white, very thick](-4.2,0) circle (2);

\draw[color=red!60, thick, rounded corners=10pt, -> ] (-2-4.2,0) -- (0-4.2, 0) - - (0-4.2, 2);

\draw[color=blue!100, thick, rounded corners=5pt, - ]  (-2-4.2,0) - - (-2-4.2,-0.1) -- (2/3+0.1-4.2, -0.1) - - (2/3+0.1-4.2, 0.2) -- (2/3+0.1-4.2, 2/3+0.2) - - (2/3-0.2-4.2, 2/3+0.2) - - (2/3-0.2-4.2, 2/3);
\draw[color=black!60, thick,  -> ] (2/3-4.2,2/3) -- (-2/3-4.2,2/3);

\filldraw [gray] (-2-4.2,0) circle (1pt); 
\filldraw [gray] (-2/3-4.2,2/3) circle (1pt);    
\filldraw [gray] (2/3-4.2,2/3) circle (1pt);    
\filldraw [gray] (2/3-4.2,-2/3) circle (1pt);    
\filldraw [gray] (-2/3-4.2,-2/3) circle (1pt);
\filldraw [gray] (2/3-0.2-4.2, 2/3) circle (2pt); 
\filldraw [gray] (0-4.2, 2/3) circle (2pt);

\tikzstyle{every node}=[font=\large]
\node[right] at (-2-4.2, 0) {$p_0$};
\node[above] at (-2/3-4.2, 2/3) {$p_1$};
\node[above] at (2/3-4.2,2/3) {$p_2$};
\node[above] at (2/3-4.2,-2/3) {$p_3$};
\node[above] at (-2/3-4.2,-2/3) {$p_4$};
\node[below] at (2/3-0.2-4.2, 2/3) {$z$}; 
\node[below] at (0-4.2, 2/3) {$z_1$};
\node[above] at (0.3-4.2, -0.2) {$h$};
\node[above] at (-4.2, -2.5) {$a$};

\filldraw[color=black!60, fill=white, very thick](0,0) circle (2);

\draw[color=red!60, thick, rounded corners=10pt, -> ] (-2,0) -- (0, 0) - - (0, 2);

\draw[color=black!60, thick, rounded corners=5pt, - ]  (-2,0) - - (-2,-0.1) -- (2/3+0.1, -0.1) - - (2/3+0.1, 0.2) -- (2/3+0.1, 2/3+0.2) - - (2/3-0.2, 2/3+0.2) - - (2/3-0.2, 2/3);
\draw[color=black!60, thick,  - ] (2/3,2/3) -- (2/3-0.2, 2/3);
\draw[color=blue!100, thick,  -  ] (2/3-0.2, 2/3) -- (0, 2/3);
\draw[color=black!60, thick,  -> ] (0, 2/3) -- (-2/3,2/3);

\filldraw [gray] (-2,0) circle (1pt); 
\filldraw [gray] (-2/3,2/3) circle (1pt);    
\filldraw [gray] (2/3,2/3) circle (1pt);    
\filldraw [gray] (2/3,-2/3) circle (1pt);    
\filldraw [gray] (-2/3,-2/3) circle (1pt);
\filldraw [gray] (2/3-0.2, 2/3) circle (2pt); 
\filldraw [gray] (0, 2/3) circle (2pt);

\tikzstyle{every node}=[font=\large]
\node[right] at (-2, 0) {$p_0$};
\node[above] at (-2/3, 2/3) {$p_1$};
\node[above] at (2/3,2/3) {$p_2$};
\node[above] at (2/3,-2/3) {$p_3$};
\node[above] at (-2/3,-2/3) {$p_4$};
\node[below] at (2/3-0.2, 2/3) {$z$}; 
\node[below] at (0, 2/3) {$z_1$};
\node[above] at (0.3, 2/3-0.1) {$t_1$};
\node[above] at (0, -2.5) {$b$};

\filldraw[color=black!60, fill=white, very thick](4.2,0) circle (2);

\draw[color=blue!60, thick, rounded corners=10pt, - ] (-2+4.2,0) -- (0+4.2, 0) - - (0+4.2, 2/3);
\draw[color=red!60, thick, rounded corners=10pt, -> ] (0+4.2, 2/3) - - (0+4.2, 2);

\draw[color=black!60, thick, rounded corners=5pt, - ]  (-2+4.2,0) - - (-2+4.2,-0.1) -- (2/3+0.1+4.2, -0.1) - - (2/3+0.1+4.2, 0.2) -- (2/3+0.1+4.2, 2/3+0.2) - - (2/3-0.2+4.2, 2/3+0.2) - - (2/3-0.2+4.2, 2/3);
\draw[color=black!60, thick,  -> ] (2/3+4.2,2/3) -- (-2/3+4.2,2/3);

\filldraw [gray] (-2+4.2,0) circle (1pt); 
\filldraw [gray] (-2/3+4.2,2/3) circle (1pt);    
\filldraw [gray] (2/3+4.2,2/3) circle (1pt);    
\filldraw [gray] (2/3+4.2,-2/3) circle (1pt);    
\filldraw [gray] (-2/3+4.2,-2/3) circle (1pt);
\filldraw [gray] (2/3-0.2+4.2, 2/3) circle (2pt); 
\filldraw [gray] (0+4.2, 2/3) circle (2pt);

\tikzstyle{every node}=[font=\large]
\node[right] at (-2+4.2, 0) {$p_0$};
\node[above] at (-2/3+4.2, 2/3) {$p_1$};
\node[above] at (2/3+4.2,2/3) {$p_2$};
\node[above] at (2/3+4.2,-2/3) {$p_3$};
\node[above] at (-2/3+4.2,-2/3) {$p_4$};
\node[below] at (2/3-0.2+4.2, 2/3) {$z$}; 
\node[below] at (0+4.2, 2/3) {$z_1$};
\node[above] at (-0.5+4.2, -0.1) {$n_1$};
\node[above] at (4.2, -2.5) {$c$};

  \end{tikzpicture}
  \caption{h -- a path from $p_0$ to $z$; $t_1$ -- a path from $z$ to $z_1$; $n_1$ -- a path from $z_1$ to $p_0$}
  \label{figure 4}
\end{figure}
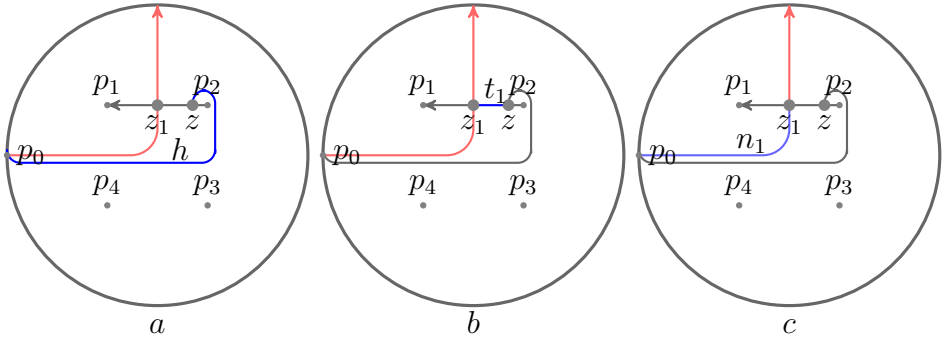

\noindent In such case the noodle-fork pairing of $F$ and $N$ is given by (see [3]):
$$\left\langle F,N\right\rangle =\sum_{1\le i\le n}{{\varepsilon }_it^{e_i}}\in 
\mathbb{Z} \left[t,t^{-1}\right]. \leqno{(2.1) }$$ 

Let $N_1$, $N_2$ and $N_3$ be the noodles given in Figure~5. These are called standard noodles. For each braid $\sigma \in B_4$, the corresponding Burau matrix $\rho \left(\sigma \right)$ can be computed using noodle-fork pairing of standard noodles and standard forks. In particular the following is true.

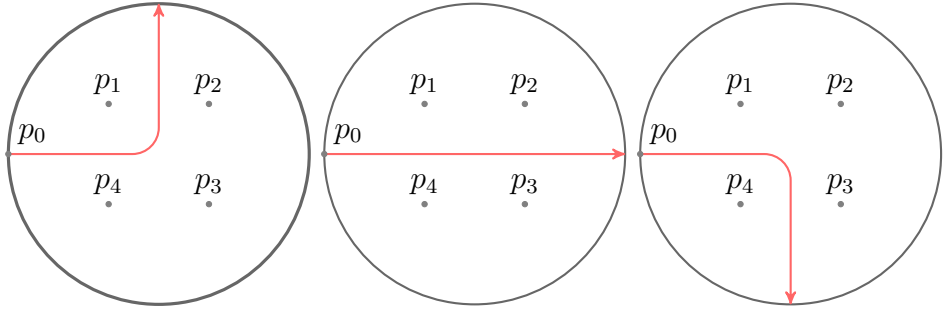
\begin{figure}[ht]
  \centering
  \begin{tikzpicture}

\filldraw[color=black!60, fill=white, very thick](-4.2,0) circle (2);

\draw[color=red!60, thick, rounded corners=10pt, -> ] (-2-4.2,0) -- (-4.2, 0) - - (-4.2, 2);

\filldraw [gray] (-2-4.2,0) circle (1pt); 
\filldraw [gray] (-2/3-4.2,2/3) circle (1pt);    
\filldraw [gray] (2/3-4.2,2/3) circle (1pt);    
\filldraw [gray] (2/3-4.2,-2/3) circle (1pt);    
\filldraw [gray] (-2/3-4.2,-2/3) circle (1pt);

\tikzstyle{every node}=[font=\large]
\node[above right] at (-2-4.2, 0) {$p_0$};
\node[above] at (-2/3-4.2, 2/3) {$p_1$};
\node[above] at (2/3-4.2,2/3) {$p_2$};
\node[above] at (2/3-4.2,-2/3) {$p_3$};
\node[above] at (-2/3-4.2,-2/3) {$p_4$};

\filldraw[color=black!60, fill=white, thick](0,0) circle (2);

\draw[color=red!60, thick, ->] (-2,0) - - (2,0);

\filldraw [gray] (-2,0) circle (1pt); 
\filldraw [gray] (-2/3,2/3) circle (1pt);    
\filldraw [gray] (2/3,2/3) circle (1pt);    
\filldraw [gray] (2/3,-2/3) circle (1pt);    
\filldraw [gray] (-2/3,-2/3) circle (1pt);

\tikzstyle{every node}=[font=\large]
\node[above right] at (-2, 0) {$p_0$};
\node[above] at (-2/3, 2/3) {$p_1$};
\node[above] at (2/3,2/3) {$p_2$};
\node[above] at (2/3,-2/3) {$p_3$};
\node[above] at (-2/3,-2/3) {$p_4$};

\filldraw[color=black!60, fill=white, thick](4.2,0) circle (2);

\draw[color=red!60, thick, rounded corners=10pt, - > ] (-2+4.2,0) - - (4.2, 0) - - (4.2, -2);

\filldraw [gray] (-2+4.2,0) circle (1pt); 
\filldraw [gray] (-2/3+4.2,2/3) circle (1pt);    
\filldraw [gray] (2/3+4.2,2/3) circle (1pt);    
\filldraw [gray] (2/3+4.2,-2/3) circle (1pt);    
\filldraw [gray] (-2/3+4.2,-2/3) circle (1pt);

\tikzstyle{every node}=[font=\large]
\node[above right] at (-2+4.2, 0) {$p_0$};
\node[above] at (-2/3+4.2, 2/3) {$p_1$};
\node[above] at (2/3+4.2,2/3) {$p_2$};
\node[above] at (2/3+4.2,-2/3) {$p_3$};
\node[above] at (-2/3+4.2,-2/3) {$p_4$};

  \end{tikzpicture}
  \caption{Standard noodles: $N_1,~~N_2,~~N_3$}
  \label{figure 5}
\end{figure}

\begin{lemma}(see [5]). Let $\sigma \in B_n$. Then for $1\le i,j\le n-1$, the entry
 ${\rho }_{ij}\left(\sigma \right)$ of its Burau matrix $\rho \left(\sigma \right)$ is 
given by 
\[{\rho }_{ij}\left(\sigma \right)=\left\langle F_i\sigma ,N_j\right\rangle .\] 
\end{lemma}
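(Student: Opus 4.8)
The plan is to deduce the formula from two structural facts about the noodle--fork pairing that are part of the forks--noodles formalism of [3], together with one explicit computation. The two facts I will take for granted are: first, that the sum (2.1) actually computes the $\mathbb{Z}[t,t^{-1}]$--valued algebraic intersection pairing of the lifted cycles $\tilde{T}(F)$ and the relevant lift of $N$, so that $\langle-,-\rangle$ is well defined on homology classes and, in particular, is linear over $\mathbb{Z}[t,t^{-1}]$ in the fork variable (with the module structure coming from the deck group); and second, that if $\varphi:D_n\to D_n$ represents $\sigma$, then applying $\varphi$ to a fork $F$ again produces a fork, written $F\sigma$, whose associated homology class is $\widetilde{\varphi}_*([F])$ for the lift $\widetilde{\varphi}$ fixing the fiber over $p_0$. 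Both are established in [3].

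First I would prove that the standard forks and standard noodles form dual bases, i.e.\ that $\langle F_i,N_j\rangle=\delta_{ij}$ for $1\le i,j\le n-1$. After a small perturbation making each tine transverse to each noodle, one reads off from Figure~3 and Figure~5 that $T(F_i)$ is disjoint from $N_j$ when $i\ne j$, while for $i=j$ the tine $T(F_i)$ meets $N_i$ in a single point $z_1$. It then remains to evaluate (2.1) at that one crossing: one checks that the intersection sign is $+1$ and that the loop $\gamma=h\,t_i\,n_i$ --- handle of $F_i$ from $p_0$ to $z$, then along $T(F_i)$ to $z_1$, then back along $N_i$ to $p_0$ --- has winding number $0$ about the punctures, so that $\langle F_i,N_i\rangle=t^{0}=1$. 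This identifies $(N_1,\dots,N_{n-1})$ as the basis of $H_1(\tilde{D}_n,\partial\tilde{D}_n;\mathbb{Z})$ dual to $(F_1,\dots,F_{n-1})$ under $\langle-,-\rangle$.

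Granting the dual--basis statement, the lemma is immediate. By the definition of the reduced Burau representation in (1.1)--(1.2), the matrix $\rho(\sigma)$ expresses the classes $[F_i\sigma]=\widetilde{\varphi}_*([F_i])$ in the basis $F_1,\dots,F_{n-1}$; unwinding that index convention, $[F_i\sigma]=\sum_{k=1}^{n-1}\rho_{ik}(\sigma)\,[F_k]$. Pairing both sides with $N_j$ and using $\mathbb{Z}[t,t^{-1}]$--linearity in the fork variable together with $\langle F_k,N_j\rangle=\delta_{kj}$ gives
\[
\langle F_i\sigma,N_j\rangle=\sum_{k=1}^{n-1}\rho_{ik}(\sigma)\,\langle F_k,N_j\rangle=\sum_{k=1}^{n-1}\rho_{ik}(\sigma)\,\delta_{kj}=\rho_{ij}(\sigma),
\]
which is the asserted formula.

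The hard part will be the dual--basis computation, and precisely the bookkeeping needed to make the diagonal entries come out to be exactly $1$ rather than merely a unit $\pm t^{m}$ of $\mathbb{Z}[t,t^{-1}]$: one must fix, consistently, the lift $\tilde{p}_0$ of the basepoint, the lift $\tilde{h}$ of each handle, and the orientation conventions (handle to the right of the tine, and the chosen orientations of the standard noodles), and then verify that these choices force the sign $+1$ and the winding number $0$ above. One must also check that the index placement in $[F_i\sigma]=\sum_k\rho_{ik}(\sigma)[F_k]$ is the one under which (1.2) is a homomorphism rather than an anti--homomorphism, since the opposite convention would merely transpose the answer and replace $\rho_{ij}$ by $\rho_{ji}$. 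Everything else in the argument is formal.
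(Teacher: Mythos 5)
The paper offers no proof of this lemma at all: it is imported verbatim from [5] with the citation ``(see [5])'' and is then used as a black box, so there is no in-paper argument to compare yours against. Judged on its own, your proof is the standard one and it is correct. The dual-basis computation does check out against Figures~3 and~5: for $i\ne j$ the straight tine $T(F_i)$ and the noodle $N_j$ are disjoint, and for $i=j$ there is a single crossing at which the tine direction rotated counterclockwise by $90^\circ$ gives the noodle direction (sign $+1$), while the loop $h\ast t_i\ast n_i$ literally retraces itself and so has winding number $0$; hence $\langle F_i,N_j\rangle=\delta_{ij}$. Your index convention $[F_i\sigma]=\sum_k\rho_{ik}(\sigma)[F_k]$ is also the one the paper actually uses --- one can confirm this from the displayed matrix $\rho(\sigma_1)$, since $[F_2\sigma_1]$ must be $t^{-1}F_1+F_2$ (an arc from $p_1$ to $p_3$), which is the second \emph{row} of that matrix, not the second column --- and with a right action of $B_4$ on forks this convention does make $\rho$ a genuine homomorphism, as you note.

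The one point you flag only implicitly but that deserves an explicit check is the linearity you invoke in the fork slot. Pairings of this kind on infinite cyclic covers are typically sesquilinear: $\mathbb{Z}[t,t^{-1}]$-linear in one variable and $t\mapsto t^{-1}$-conjugate-linear in the other, and if the fork slot were the conjugate-linear one your final computation would return $\rho_{ij}(\sigma)(t^{-1})$ rather than $\rho_{ij}(\sigma)(t)$. With the paper's definition (2.1) the fork slot is honestly linear: replacing $F$ by $tF$ means moving the endpoint $\tilde h(1)$ of the lifted handle by the deck transformation $t$, which adds $1$ to every exponent $e_i$ in the sum and so multiplies $\langle F,N\rangle$ by $t$. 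Once that is recorded, your argument is complete modulo the facts legitimately quoted from [3] (well-definedness of the pairing on homology, and that $F\sigma$ represents $\widetilde{\varphi}_*[F]$).
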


Note that under the convention adopted here we have
\[\rho \left({\sigma }_1\right)=\left( \begin{array}{ccc}
-t^{-1} & 0 & 0 \\ 
t^{-1} & 1 & 0 \\ 
0 & 0 & 1 \end{array}
\right),     \rho \left({\sigma }_2\right)=\left( \begin{array}{ccc}
1 & 1 & 0 \\ 
0 & -t^{-1} & 0 \\ 
0 & t^{-1} & 1 \end{array}
\right)\]\\
and \[ \rho \left({\sigma }_3\right)=\left( \begin{array}{ccc}
1 & 0 & 0 \\ 
0 & 1 & 1 \\ 
0 & 0 & -t^{-1} \end{array}
\right).\] 
\\\

For example, to calculate $\rho_{1,1}(\sigma_1)$ entry of the matrix $\rho(\sigma_1)$ see the corresponding Figure 6. Note that intersection of the tine $T\left( F_1 \sigma_1 \right)$ of the fork $ F_1 \sigma_1 $ and the noodle $N_1$ 
 at point $z_1$ is negative which means that ${\varepsilon }_i = -1$  (see Figure 6a). On the other hand the winding number $e_1$ of the loop ${\gamma }_1$  (see Figure 6b) around puncture points equals $-1$
because the considered loop misses $p_1, p_3$ and $p_4$ and it goes around $p_2$ once in anti--clockwise direction. Therefore
$$\rho_{11}(\sigma_1)=\left\langle F_1\sigma_1 ,N_1\right\rangle=-t^{-1}$$

 \ \
 
 \ \
 
 \ \
 
 \ \

\begin{figure}[ht]
  \centering
\resizebox{330pt}{!}{%
  \begin{tikzpicture}

\filldraw[color=black!60, fill=white, very thick](-2.5,0) circle (2);

\draw[color=red!60, thick, rounded corners=10pt, -> ] (-2-2.5,0) -- (-2.5, 0) - - (-2.5, 2);

\draw[color=black!60, thick, rounded corners=5pt, - ]  (-2-2.5,0) - - (-2-2.5,-0.1) -- (2/3+0.1-2.5, -0.1) - - (2/3+0.1-2.5, 0.2) -- (2/3+0.1-2.5, 2/3+0.2) - - (2/3-0.2-2.5, 2/3+0.2) - - (2/3-0.2-2.5, 2/3);
\draw[color=black!60, thick,  -> ] (2/3-2.5,2/3) -- (-2/3-2.5,2/3);

\filldraw [gray] (-2-2.5,0) circle (1pt); 
\filldraw [gray] (-2/3-2.5,2/3) circle (1pt);    
\filldraw [gray] (2/3-2.5,2/3) circle (1pt);    
\filldraw [gray] (2/3-2.5,-2/3) circle (1pt);    
\filldraw [gray] (-2/3-2.5,-2/3) circle (1pt);
\filldraw [gray] (2/3-0.2-2.5, 2/3) circle (2pt); 
\filldraw [gray] (0-2.5, 2/3) circle (2pt);

\tikzstyle{every node}=[font=\large]
\node[right] at (-2-2.5, 0) {$p_0$};
\node[above] at (-2/3-2.5, 2/3) {$p_1$};
\node[above] at (2/3-2.5,2/3) {$p_2$};
\node[above] at (2/3-2.5,-2/3) {$p_3$};
\node[above] at (-2/3-2.5,-2/3) {$p_4$};
\node[below] at (2/3-0.2-2.5, 2/3) {$z$}; 
\node[below] at (0-2.5, 2/3) {$z_1$};
\node[below] at (0-2.5, -2.5) {$a: e_1=-1$};

\filldraw[color=black!60, fill=white, very thick](2.5,0) circle (2);

\draw[color=red!60, thick, rounded corners=10pt, - ] (0+2.5, 2/3) -- (0+2.5, 0) - - (-2+2.5,0);
\draw[color=red!60, thick, rounded corners=10pt, -> ] (-2/3+2.5+0.3, 0) -- (-2/3+2.5-0.3, 0);

\draw[color=black!60, thick, rounded corners=5pt, -> ]  (-2/3+2.5-0.3,-0.1) -- (-2/3+2.5+0.3, -0.1);

\draw[color=black!60, thick, rounded corners=5pt, - ]  (-2+2.5,0) - - (-2+2.5,-0.1) -- (2/3+0.1+2.5, -0.1) - - (2/3+0.1+2.5, 0.2) -- (2/3+0.1+2.5, 2/3+0.2) - - (2/3-0.2+2.5, 2/3+0.2) - - (2/3-0.2+2.5, 2/3);

\draw[color=black!60, thick,  -> ] (2/3-0.2+2.5,2/3) -- (0+2.5,2/3);
\draw[color=black!60, thick,  -> ] (2/3-0.2+2.5,2/3) -- (0+2.5,2/3);

\filldraw [gray] (-2+2.5,0) circle (1pt); 
\filldraw [gray] (-2/3+2.5,2/3) circle (1pt);    
\filldraw [gray] (2/3+2.5,2/3) circle (1pt);    
\filldraw [gray] (2/3+2.5,-2/3) circle (1pt);    
\filldraw [gray] (-2/3+2.5,-2/3) circle (1pt);
\filldraw [gray] (2/3-0.2+2.5, 2/3) circle (2pt); 
\filldraw [gray] (0+2.5, 2/3) circle (2pt);

\tikzstyle{every node}=[font=\large]
\node[right] at (-2+2.5, 0) {$p_0$};
\node[above] at (-2/3+2.5, 2/3) {$p_1$};
\node[above] at (2/3+2.5,2/3) {$p_2$};
\node[above] at (2/3+2.5,-2/3) {$p_3$};
\node[above] at (-2/3+2.5,-2/3) {$p_4$};
\node[below] at (2/3-0.2+2.5, 2/3) {$z$}; 
\node[below] at (0+2.5, 2/3) {$z_1$};
\node[above] at (0.3+2.5, -0.2) {$h$};
\node[above] at (0.3+2.5, 2/3-0.1) {$t_1$};
\node[above] at (-0.5+2.5, -0.1) {$n_1$};

\node[below] at (0+2.5, -2.5) {$b: \gamma_1 = n_1*t_1*h;  \varepsilon_1 = -1$};

  \end{tikzpicture}
  }
  \caption{$\left\langle F_1\sigma_1 ,N_1\right\rangle=-t^{-1}$ is monomial, because the intersection of $ F_1 \sigma_1 $ and the  noodle $N_1$ is just one point $z_1$ }
  \label{figure 6}
\end{figure}

\section{The Bokut-Vesnin generators and kernel elements of the Burau representation}

The braid groups $B_4$ and $B_3$ are defined by the following standard presentations [1]: 
\[B_4=\left\langle {\sigma }_1,\ {\sigma }_2,{\sigma }_3\left|{\sigma }_1{\sigma }_2{\sigma }_1={\sigma }_2{\sigma }_1{\sigma }_2,\ \ {\ \sigma }_3{\sigma }_2{\sigma }_3={\sigma }_2{\sigma }_3{\sigma }_2,\ \ \ {\sigma }_3{\sigma }_1={\sigma }_1{\sigma }_3\right.\right\rangle ,\] 
\[B_3=\left\langle {\sigma }_1,\ {\sigma }_2\left|{\sigma }_1{\sigma }_2{\sigma }_1={\sigma }_2{\sigma }_1{\sigma }_2\right.\right\rangle .\] 
Let $\varphi :B_4\to B_3$ be the homomorphism defined by
\[\varphi \left({\sigma }_1\right)={\sigma }_1,\ \ \varphi \left({\sigma }_2\right)={\sigma }_2,\ \ \varphi \left({\sigma }_3\right)={\sigma }_1.\] 
The kernel of  $\varphi $ is known to be a free group $F\left( a,b\right)$ of two generators [4];
\[a={\sigma }_1{\sigma }_2{\sigma }^{-1}_1{\sigma }_3{\sigma }_2^{-1}{\sigma }_1^{-1},\ \ b={\sigma }_3{\sigma }^{-1}_1.\] 

This was proved by L. Bokut and A. Vesnin [4]. We will refer to $a$ and $b$ as the Bokut--Vesnin generators.  
The generators $a$ and $b$ are in fact much more similar than they look at the first glance. This becomes obvious when we interpret $B_4$ as the mapping class group of the $4$--punctured disc. In this well--known approach a braid is an isotopy class of homeomorphisms of the punctured disc fixing the boundary. Figure~7 shows $a$ and $b$ as homeomorphisms of the  punctured disc. The punctures are arranged to make the similarity more visible. Another advantage of this approach is that 
it gives natural interpretation to various actions of $B_4$ to be considered later in this paper.

\begin{figure}[ht]
  \centering
  \resizebox{330pt}{!}{%
  \begin{tikzpicture}

\filldraw[color=black!60, fill=white, very thick](-2.5,0) circle (2);

\draw[color=black!60, fill=white, thick,->] (-2/3+0.2-2.5, 2/3) arc (15:-15:2.5);
\draw[color=black!60, fill=white, thick,->] (2/3+0.2-2.5, -2/3) arc (-15:15:2.5);

\draw[color=black!60, fill=white, thick,->] (-2/3-0.2-2.5, -2/3) arc (195:165:2.5);
\draw[color=black!60, fill=white, thick,->] (2/3-0.2-2.5, 2/3) arc (165:195:2.5);

\filldraw [gray] (-2-2.5,0) circle (1pt); 
\filldraw [gray] (-2/3-2.5,2/3) circle (1pt);    
\filldraw [gray] (2/3-2.5,2/3) circle (1pt);    
\filldraw [gray] (2/3-2.5,-2/3) circle (1pt);    
\filldraw [gray] (-2/3-2.5,-2/3) circle (1pt);

\tikzstyle{every node}=[font=\large]
\node[right] at (-2-2.5, 0) {$p_0$};
\node[above] at (-2/3-2.5, 2/3) {$p_1$};
\node[above] at (2/3-2.5,2/3) {$p_2$};
\node[above] at (2/3-2.5,-2/3) {$p_3$};
\node[above] at (-2/3-2.5,-2/3) {$p_4$};

\filldraw[color=black!60, fill=white, very thick](2.5,0) circle (2);

\draw[color=black!60, fill=white, thick,->] (-2/3+2.5, 2/3+0.2) arc (105:75:2.5);
\draw[color=black!60, fill=white, thick,->] (-2/3+2.5, -2/3-0.2) arc (-105:-75:2.5);

\draw[color=black!60, fill=white, thick,->] (2/3+2.5, 2/3-0.2) arc (-75:-105:2.5);
\draw[color=black!60, fill=white, thick,->] (2/3+2.5, -2/3+0.2) arc (75:105:2.5);

\filldraw [gray] (-2+2.5,0) circle (1pt); 
\filldraw [gray] (-2/3+2.5,2/3) circle (1pt);    
\filldraw [gray] (2/3+2.5,2/3) circle (1pt);    
\filldraw [gray] (2/3+2.5,-2/3) circle (1pt);    
\filldraw [gray] (-2/3+2.5,-2/3) circle (1pt);

\tikzstyle{every node}=[font=\large]
\node[right] at (-2+2.5, 0) {$p_0$};
\node[right] at (-2/3+2.5, 2/3) {$p_1$};
\node[right] at (2/3+2.5,2/3) {$p_2$};
\node[right] at (2/3+2.5,-2/3) {$p_3$};
\node[right] at (-2/3+2.5,-2/3) {$p_4$};

  \end{tikzpicture}
  }
  \caption{}
  \label{figure 7}
\end{figure}

The following Proposition is crucial to our considerations.

\begin{proposition} $\ker \rho_4 \subset \ker \varphi$
\end{proposition}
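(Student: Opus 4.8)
The plan is to reduce the statement to the faithfulness of a representation we already control. The reduced Burau representation $\rho_3$ of $B_3$ is faithful (the case $n=3$, [1], [2]), so the composite $\rho_3\circ\varphi:B_4\to GL\bigl(2,\mathbb{Z}[t,t^{-1}]\bigr)$ has kernel exactly $\ker\varphi$. Hence it suffices to prove that $\rho_4(\sigma)=\mathrm{Id}$ forces $\rho_3(\varphi(\sigma))=\mathrm{Id}$ — informally, that the $3\times 3$ Burau matrix of $\sigma$ must already determine the $2\times 2$ Burau matrix of $\varphi(\sigma)$.

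To organise the argument I would use the Bokut--Vesnin picture together with the shape of the standard matrices. The parabolic subgroup $\langle\sigma_1,\sigma_2\rangle\le B_4$ is isomorphic to $B_3$ (via $\sigma_i\mapsto\sigma_i$) and $\varphi$ carries it identically onto $B_3$, so, since $\ker\varphi=F(a,b)$, we get the semidirect decomposition $B_4\cong F(a,b)\rtimes\langle\sigma_1,\sigma_2\rangle$; write $\sigma=w s$ with $w\in F(a,b)=\ker\varphi$ and $s\in\langle\sigma_1,\sigma_2\rangle$, so that $\varphi(\sigma)=s$ and the goal is $s=1$. Next, the displayed matrices $\rho(\sigma_1)$ and $\rho(\sigma_2)$ are block lower-triangular with respect to the splitting of $H_1(\tilde{D}_4;\mathbb{Z})$ into its first two coordinates and its third coordinate: their common third column is $(0,0,1)^{\top}$, and their upper-left $2\times2$ blocks are precisely $\rho_3(\sigma_1)$ and $\rho_3(\sigma_2)$. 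Multiplying such matrices keeps them block lower-triangular and multiplies the upper-left blocks, so for every $s\in\langle\sigma_1,\sigma_2\rangle$ the upper-left block of $\rho_4(s)$ equals $\rho_3(s)$; in particular $\rho_4$ is faithful on $\langle\sigma_1,\sigma_2\rangle$. Now $\rho_4(w)\rho_4(s)=\mathrm{Id}$ gives $\rho_4(s)=\rho_4(w^{-1})$ with $w^{-1}\in F(a,b)$, so by faithfulness on $\langle\sigma_1,\sigma_2\rangle$ the whole proposition reduces to the single assertion
\[
\rho_4\bigl(\langle\sigma_1,\sigma_2\rangle\bigr)\cap\rho_4\bigl(F(a,b)\bigr)=\{\mathrm{Id}\}.
\]

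That intersection statement is the genuine content, and it is where I expect the real difficulty to lie. Since the reduced Burau representation of $B_4$ is irreducible over the field of fractions of $\mathbb{Z}[t,t^{-1}]$, the representation $\rho_3\circ\varphi$ is neither a subrepresentation nor a quotient of $\rho_4$, so there is no $\mathbb{Z}[t,t^{-1}]$-linear intertwiner to invoke: the two images must be separated directly. The natural tools are the fork-noodle calculus of Lemma~2.3 and degree bookkeeping of the Laurent-polynomial entries (the same circle of ideas used later in the paper): one would compare, for $s\in\langle\sigma_1,\sigma_2\rangle$ and $w\in F(a,b)$, suitable invariants of the pairings $\langle F_i s,N_j\rangle$ and $\langle F_i w,N_j\rangle$ — for instance the third column of the matrix and the lowest and highest degrees of its entries, which are governed by how the noodle $N_3$ records the generator $\sigma_3$ occurring in every nontrivial reduced word in $a,b$ but in no word of $\langle\sigma_1,\sigma_2\rangle$ — and show that they can coincide only for the trivial braid. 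A convenient preliminary reduction is the specialisation $t=1$: the matrices $\rho(\sigma_i)|_{t=1}$ are involutions, so $\rho_4|_{t=1}$ factors through the faithful $3$-dimensional representation of $S_4$; hence $\ker\rho_4\subseteq P_4$ and one may further assume that $s$ and $w$ are pure braids, leaving the hard core of the argument inside the pure braid group.
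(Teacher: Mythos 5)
Your argument is incomplete: the reduction you carry out is correct but circular in effect, and the one claim that carries all the content is left unproved. Writing $\sigma=ws$ with $w\in F(a,b)$ and $s\in\langle\sigma_1,\sigma_2\rangle$, and observing that $\rho_4$ is faithful on the parabolic subgroup (via the invariant line spanned by $F_3$ and the induced $\rho_3$ on the quotient), you correctly show that the Proposition is \emph{equivalent} to $\rho_4\bigl(\langle\sigma_1,\sigma_2\rangle\bigr)\cap\rho_4\bigl(F(a,b)\bigr)=\{\mathrm{Id}\}$. But that is just a restatement of the problem in the semidirect decomposition $B_4\cong F(a,b)\rtimes B_3$, and you acknowledge yourself that this is ``where the real difficulty lies.'' The paragraph that follows --- comparing unspecified ``invariants'' of the pairings $\langle F_is,N_j\rangle$ and $\langle F_iw,N_j\rangle$ and appealing to how $N_3$ ``records'' occurrences of $\sigma_3$ --- is a plan, not a proof; no such invariant is constructed, and your own (correct) observation that $\rho_4$ is irreducible, so that no $\mathbb{Z}[t,t^{-1}]$-linear intertwiner relates $\rho_4$ to $\rho_3\circ\varphi$, is precisely the obstruction that makes this direct linear-algebraic separation hard.

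The paper avoids this entirely by passing to the Temperley--Lieb quotient, which is the missing idea. There \emph{is} a well-defined algebra homomorphism $\psi:TL_4\to TL_3$ sending $U_1^1\mapsto U_1^1$, $U_2^2\mapsto U_2^2$, $U_3^3\mapsto U_1^1$ (one checks it preserves the relations (TL1)--(TL3)), and it makes the square $\theta_3\circ\varphi=\psi\circ\theta_4$ commute, where $\theta_n$ is the Jones representation $\sigma_i\mapsto A+A^{-1}U_i^i$. Combined with the cited facts that $\ker\theta_n=\ker\rho_n$ for $n=3,4$ and that $\theta_3$ is faithful, this gives: $\sigma\in\ker\rho_4=\ker\theta_4$ implies $\theta_3(\varphi(\sigma))=\psi(\theta_4(\sigma))=1$, hence $\varphi(\sigma)=1$. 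In other words, the folding $\sigma_3\mapsto\sigma_1$, which admits no compatible linear map at the level of the Burau modules (as you noted), \emph{does} admit one at the level of the Temperley--Lieb algebras, and that is what closes the argument. To repair your write-up you would either need to supply an actual proof of the intersection statement or adopt some replacement for the $TL$ detour; as it stands, the proposal does not establish the Proposition.
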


\begin{proof}
Let us make a slight detour into the realm of the Temperley-Lieb algebras $TL_3$ and $TL_4$. The Temperley-Lieb algebra $TL_n$ is defined as an algebra over $\mathbb{Z}[t,t^{-1}]$. It has $n-1$ generators $\lbrace U^i_i \rbrace _{i=1}^{n-1}$, and the following relations:

(TL1) $U^i_i  U^i_i=(-t^{-2}-t^2)U_i^i,$

(TL2) $U^i_i  U^j_j  U^i_i=U_i^i,$ for $|i-j|=1,$

(TL3) $U^i_i  U^j_j=U^j_j  U^i_i,$ for $|i-j|>1. $

Let us consider the homomorphism $\psi :TL_4 \to TL_3 $ defined by
\[U_1^1 \to U_1^1, \\\  U_2^2 \to U_2^2, \\\ U_3^3 \to U_1^1.\] 

Also, we need to use the Jones' representation $\theta :B_n \to TL_n$ defined by sending $\sigma_i$ to $A+A^{-1}U^i_i$. It is known (see [3], Proposition~1.5) that for $n=3,4$ we have $\ker \theta_n = \ker \rho_n$.
Moreover, the following diagram is obviously commutative:

\begin{center}
\begin{tikzpicture}
\node (A) {$B_4$};
\node (B) [node distance=3cm, right of=A] {$TL_4$};
\node (C) [node distance=2cm, below of=A] {$B_3$};
\node (D) [node distance=3cm, right of=C] {$TL_3$};
\draw[->] (A) to node [above]{$ \theta_4 $} (B);
\draw[->] (A) to node [left]{$ \varphi $}(C);
\draw[->] (C) to node [below]{$ \theta_3$} (D);
\draw[->] (B) to node [right]{$ \psi $} (D);
\end{tikzpicture}
\end{center}

On the other hand the representation $\theta_3$ is faithful and therefore $\ker \rho_4  = \ker \theta_4 \subset \ker \varphi$.
\end{proof}

\begin{corollary} All kernel elements of the Burau representation may be written as words in the Bokut--Vesnin generators $a$, $b$, $a^{-1}$, $b^{-1}$. Moreover, all possible $nontrivial$ elements in the kernel may be written as reduced words of positive  length.
\end{corollary}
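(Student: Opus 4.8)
The plan is to read the Corollary off the Proposition together with the Bokut--Vesnin description of $\ker\varphi$. First I would recall the input from [4]: the kernel of the homomorphism $\varphi\colon B_4\to B_3$ is not merely generated by $a$ and $b$, it is the \emph{free} group $F(a,b)$ on these two elements. Granting this, the argument splits into two short steps, the first producing the expressibility claim and the second producing the "reduced word of positive length" refinement.

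\emph{Step 1.} By the Proposition we have $\ker\rho_4\subset\ker\varphi$. Since $\ker\varphi=F(a,b)$, every element of $\ker\rho_4$ is an element of this free group, hence is represented by some word in the four letters $a$, $a^{-1}$, $b$, $b^{-1}$; this is the first assertion. \emph{Step 2.} For the refinement I would invoke the standard normal-form theory of free groups: starting from any word representing a given element of $F(a,b)$ and successively deleting adjacent cancelling pairs $xx^{-1}$ or $x^{-1}x$, one arrives at a reduced word, and this reduced word is uniquely determined by the element. The empty word, of length $0$, represents the identity, and it is the only reduced word doing so. Consequently, if $\sigma\in\ker\rho_4$ is nontrivial, its reduced word in $a$, $a^{-1}$, $b$, $b^{-1}$ must have length at least $1$, which is precisely the stated conclusion.

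There is no genuine obstacle at this stage: all of the real work has already been carried out in the Proposition, and the Corollary is a direct translation of the inclusion $\ker\rho_4\subset\ker\varphi$ across the identification $\ker\varphi\cong F(a,b)$. The one point worth stressing is that the notions of "reduced word" and "word length" that appear in the statement are meaningful precisely because $\ker\varphi$ is free on $a$ and $b$, rather than an arbitrary two-generator subgroup of $B_4$; it is for this reason that the Bokut--Vesnin theorem, and not just the fact that $a,b$ generate $\ker\varphi$, is the essential ingredient. This reduced-word normal form is also what will make it legitimate, in the later sections, to speak of the word length of a kernel element and to single out the leading factors $a^{n}$ when analysing $\rho_{11}(a^{n}\sigma)$ and $\rho_{13}(a^{n}\sigma)$.
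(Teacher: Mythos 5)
Your proposal is correct and follows exactly the route the paper intends: the Corollary is read off directly from Proposition~3.1 together with the Bokut--Vesnin theorem that $\ker\varphi=F(a,b)$ is free, with the uniqueness of reduced words in a free group giving the ``positive length'' refinement (the paper itself offers no written proof, treating this as immediate). Your remark that freeness, not mere generation, is the essential point is exactly the right emphasis.
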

\noindent We will use this fact in the next section.

We present for future use the images of $a$, $b$, $a^{-1}$ and  $b^{-1}$ under the Burau representation:

\[\rho \left(a\right)=\left( \begin{array}{ccc}
-t^{-1}+1 & -t^{-1}+t & -t^{-1} \\ 
0 & -t & 0 \\ 
-1 & 0 & 0 \end{array}
\right),   \] 

\[\rho \left(b\right)=\left( \begin{array}{ccc}
-t & 0 & 0 \\ 
1 & 1 & 1 \\ 
0 & 0 & -t^{-1} \end{array}
\right).\] 

\[\rho \left(a^{-1}\right)=\left( \begin{array}{ccc}
0 & 0 & -1 \\ 
0 & -t^{-1} & 0 \\ 
-t & t^{-1}-t & 1-t \end{array}
\right),   \] 

\[\rho \left(b^{-1}\right)=\left( \begin{array}{ccc}
-t^{-1} & 0 & 0 \\ 
t^{-1} & 1 & t \\ 
0 & 0 & -t \end{array}
\right).\]

\section{Faithfulness Problem of the Burau representation}

Let us outline the strategy for analyzing $\ker \rho_4$ in general terms. Consider a braid $\sigma$ that is
a candidate for a non--trivial kernel element of  the Burau representation. Of course we can exclude from our considerations all those non--trivial braids for which we 
{\sl know} for whatever reason that they do not belong to the kernel. Also, 
we can adjust the remaining candidates in some ways --- like replacing $\sigma$ with 
a suitably chosen conjugate of $\sigma$. For such a suitably chosen braid $\sigma$ we need to give some argument which shows that $\rho_{11}(\sigma)$ and $\rho_{31}(\sigma)$ should be non--zero and that $\deg_{min}(\rho_{11}) - \deg_{min}(\rho_{31}) = -1$,
 where  $\deg_{min}$ denotes the exponent of the lowest degree term in the considered Laurent polynomial.

\bigskip
To simplify notation we will denote by
$S_i(t^{\pm 1})$ the $i^{th}$ partial sum of the geometric series with initial term $1$ and quotient 
$-t$ or $-t^{-1}$ (e.g. $S_2(t^{-1}) = 1 - t^{-1} + t^{-2}$).

\begin{lemma} For each braid $\sigma \in B_4$ there exists $n\in \mathbb{N}$, such that\\
\noindent (1) the ${\rho }_{11}(a^{n}\sigma )$ and ${\rho }_{31}(a^{n}\sigma )$  
entries of the Burau matrix $\rho \left(a^n\sigma \right)$ can be decomposed as a sum of three uniquely
determined polynomials
\[{\rho }_{11}\left(a^n\sigma \right)=P\left(t,t^{-1}\right)\left(1-t^{-1}\right)+Q\left(t,t^{-1}\right)+R\left(t,t^{-1}\right)(1-t),\] 
\[{\ \rho }_{31}\left(a^n\sigma \right)=-P\left(t,t^{-1}\right)-Q\left(t,t^{-1}\right)-R\left(t,t^{-1}\right).\] 
\\
such that\\
\noindent (2) for each $m\in \mathbb{N}$ we have
\[{\rho }_{11}\left(a^{m+n}\sigma \right)=P\left(t,t^{-1}\right)
\left(S_{m+1}(t^{-1})\right)+Q \left( t,t^{-1} \right)+ R \left( t,t^{-1} \right) \left( S_{m+1} (t) \right),\] 
\[{\rho }_{31}\left(a^{m+n}\sigma \right)=-P \left(t,t^{-1}\right) \left(S_m(t^{-1})\right)- 
Q\left(t,t^{-1}\right)-R\left(t,t^{-1}\right)\left(S_m(t)\right).\] 
\\
\noindent (3) Moreover, if $\sigma$ is a pure braid, then the polynomial $P$ is non--zero.
\end{lemma}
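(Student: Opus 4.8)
The plan is to reduce the whole statement to elementary linear algebra for the single matrix $\rho(a)$, using that $\rho$ is a multiplicative homomorphism, so that $\rho(a^{m+n}\sigma)=\rho(a)^{m}\,\rho(a)^{n}\,\rho(\sigma)$, and that $\rho(a)$ has a very transparent spectral picture. First I would record the eigendata of $\rho(a)$: a direct computation gives the (as elements of $\mathbb{Q}(t)$, distinct) eigenvalues $1,\ -t,\ -t^{-1}$, with right eigenvectors $u_{1}=(-1,0,1)^{T}$, $u_{-t}=(t,\,-t-1,\,1)^{T}$, $u_{-t^{-1}}=(1,0,t)^{T}$, while $(1,1,1)$ is a left eigenvector for the eigenvalue $-t^{-1}$; I also note the elementary fact that the third row of $\rho(a)$ is $(-1,0,0)$, so that $\rho_{3j}(a\tau)=-\rho_{1j}(\tau)$ for every braid $\tau$. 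Now fix any $n\ge 1$ (this already supplies the $n$ of the statement), write $x_{i}:=\rho_{i1}(a^{n}\sigma)$ for the entries of the first column of $\rho(a^{n}\sigma)$, and \emph{define}
\[P:=-t\,(x_{1}+x_{2}+x_{3}),\qquad Q:=t\,x_{1}+(t-t^{-1})\,x_{2}+(t-1)\,x_{3},\qquad R:=t^{-1}x_{2}.\]
These are Laurent polynomials, and a one-line expansion gives $P(1-t^{-1})+Q+R(1-t)=x_{1}$ and $-(P+Q+R)=x_{3}$, which is exactly part~(1).

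For part~(2) I would decompose the first column of $\rho(a^{n}\sigma)$ in the eigenbasis, writing $(x_{1},x_{2},x_{3})^{T}=\alpha u_{1}+\beta u_{-t^{-1}}+\gamma u_{-t}$ with $\alpha,\beta,\gamma\in\mathbb{Q}(t)$ explicitly computed (for instance $\beta=(x_{1}+x_{2}+x_{3})/(t+1)$, $\gamma=-x_{2}/(t+1)$); the factorisation $t^{2}+t-1-t^{-1}=(t+1)(t-t^{-1})$ is exactly what makes the combinations $P,Q,R$ above come out as Laurent polynomials. Since $\rho(a^{m+n}\sigma)=\rho(a)^{m}\rho(a^{n}\sigma)$, the first column of $\rho(a^{m+n}\sigma)$ is $\alpha u_{1}+\beta(-t^{-1})^{m}u_{-t^{-1}}+\gamma(-t)^{m}u_{-t}$. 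Reading off the first and third coordinates and re-expressing the monomials $(-t^{\pm1})^{m}$ through the partial sums via $S_{k}(t^{\pm1})=\bigl(1-(-t^{\pm1})^{k+1}\bigr)/(1+t^{\pm1})$ converts these coordinates into precisely $P\,S_{m+1}(t^{-1})+Q+R\,S_{m+1}(t)$ and $-P\,S_{m}(t^{-1})-Q-R\,S_{m}(t)$, with the $P,Q,R$ above (equivalently, this is just the explicit form of $\rho(a)^{m}$, whose corner entries are the $S_{k}$'s and whose $(1,2)$-entry is $tS_{m-1}(t)-t^{-1}S_{m-1}(t^{-1})$); the $\rho_{31}$-formula can alternatively be taken for free from $\rho_{31}(a^{m+n}\sigma)=-\rho_{11}(a^{m+n-1}\sigma)$ noted above. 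Uniqueness of $(P,Q,R)$ is then immediate: the three functions $m\mapsto 1,\ (-t^{-1})^{m},\ (-t)^{m}$ are linearly independent over $\mathbb{Q}(t)$, so requiring the displayed identity for all $m$ determines the coefficient of $(-t^{-1})^{m}$ (hence $P$), then that of $(-t)^{m}$ (hence $R$), then the constant term (hence $Q$).

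For part~(3) the key point is the specialisation $t=1$: each of $\rho(\sigma_{1}),\rho(\sigma_{2}),\rho(\sigma_{3})$ squares to the identity at $t=1$ (immediate from the three explicit matrices), so $\rho(\,\cdot\,)|_{t=1}$ factors through the quotient of $B_{4}$ by the normal closure of $\{\sigma_{1}^{2},\sigma_{2}^{2},\sigma_{3}^{2}\}$, i.e.\ through the symmetric group $S_{4}$. Hence if $\sigma$ is a pure braid then $\rho(\sigma)|_{t=1}$ is the identity matrix, its first column is $(1,0,0)^{T}$, and in particular $\bigl(\rho_{11}(\sigma)+\rho_{21}(\sigma)+\rho_{31}(\sigma)\bigr)\big|_{t=1}=1$, so $\rho_{11}(\sigma)+\rho_{21}(\sigma)+\rho_{31}(\sigma)$ is a nonzero Laurent polynomial. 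Finally, since $(1,1,1)$ is a left $(-t^{-1})$-eigenvector of $\rho(a)$ and the first column of $\rho(a^{n}\sigma)$ is $\rho(a)^{n}$ applied to the first column of $\rho(\sigma)$, one gets $x_{1}+x_{2}+x_{3}=(-t^{-1})^{n}\bigl(\rho_{11}(\sigma)+\rho_{21}(\sigma)+\rho_{31}(\sigma)\bigr)$, whence $P=-t(x_{1}+x_{2}+x_{3})=(-1)^{n+1}t^{1-n}\bigl(\rho_{11}(\sigma)+\rho_{21}(\sigma)+\rho_{31}(\sigma)\bigr)\ne 0$, as required.

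Every individual computation here is routine; what keeps the argument from becoming a morass of Laurent-polynomial bookkeeping is the spectral decomposition of $\rho(a)$, which I would prove first as a standalone lemma. The one genuinely structural observation is that the ``$S(t)$''-part of $\rho_{11}(a^{m+n}\sigma)$ comes entirely from the eigenvalue $-t$ of $\rho(a)$ (equivalently, from the off-diagonal entry $\rho_{12}(a)=t-t^{-1}$), and the only step needing an idea rather than a calculation is the $t=1$ specialisation behind part~(3); beyond organising the algebra cleanly I do not foresee a real obstacle.
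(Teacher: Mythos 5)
Your proof is correct, and it takes a genuinely different route from the paper's. The paper argues geometrically: it realises $\rho_{11}(a^{n}\sigma)$ and $\rho_{31}(a^{n}\sigma)$ as the fork--noodle pairings $\left\langle F_1a^{n},N_1\sigma^{-1}\right\rangle$ and $\left\langle F_3a^{n},N_1\sigma^{-1}\right\rangle$ and defines $P$, $R$, $Q$ by grouping the intersection points according to whether they lie in neighbourhoods $U_1$, $U_2$ of the segments $T_4$, $T_2$ or elsewhere; the requirement that $n$ be sufficiently large exists precisely so that the tine of $F_1a^{n}$ meets every strand of $N_1\sigma^{-1}$ crossing $T_4$ and $T_2$, and part (3) comes from a parity count of those strands when $\sigma$ is pure. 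Your spectral argument replaces all of this with linear algebra for the single matrix $\rho(a)$ (eigenvalues $1$, $-t$, $-t^{-1}$, with $(1,1,1)$ a left $(-t^{-1})$-eigenvector); I have checked the eigendata, the identity $P(1-t^{-1})+Q+R(1-t)=x_1$, $-(P+Q+R)=x_3$, and the matching of the $(-t^{\pm 1})^{m}$-parts with $S_{m+1}$ versus $S_{m}$, and your closed formulas $P=-t(x_1+x_2+x_3)$, $Q=tx_1+(t-t^{-1})x_2+(t-1)x_3$, $R=t^{-1}x_2$ reproduce the paper's Example 2 (where they give $P=-t^{-5}+t^{-4}-t^{-2}$, $Q=t^{-1}$, $R=0$) and are equivalent to the inversion formulas the authors themselves write down in Example 4. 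What your approach buys: it is fully rigorous where the paper's general-position discussion is informal, it shows the decomposition exists for \emph{every} $n\ge 1$ rather than only for sufficiently large $n$, and the $t=1$ specialisation through $S_4$ is a cleaner proof of (3) than the odd-strand count. What the paper's approach buys, and what yours deliberately discards, is the geometric meaning of $P$, $Q$, $R$ as contributions of specific families of intersection points; that interpretation is not needed for the lemma itself but is what drives the lowest-degree heuristics behind Conjecture 4.2, so the two proofs are complementary rather than interchangeable.
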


\begin{proof} First of all let us observe that uniqueness of $P,Q$ and $R$ follows 
from properties (1) and (2) and general algebra. This means that we only need to prove existence
and property (3). While it is possible to give specific algebraic formulas for $P,Q$ and $R$ 
we prefer to prove existence using forks and noodles. We will always assume that the fork/noodle 
configuration considered is irreducible.\\
Let $\sigma \in B_4$ be any braid.
By Lemma~2.3 ${\rho }_{11}\left(a^n\sigma \right)=\left\langle F_1a^n\sigma ,N_1\right\rangle $ 
and ${\rho }_{31}\left(a^n\sigma \right)=\left\langle F_3a^n\sigma ,N_1\right\rangle $. On the other 
hand $\left\langle -,-\right\rangle $
is a bilinear form, so 
$\left\langle F_1a^n\sigma ,N_1\right\rangle =\left\langle F_1a^n,N_1{\sigma }^{-1}\right\rangle $ 
and $\left\langle F_3a^n\sigma ,N_1\right\rangle =\left\langle F_3a^n,N_1{\sigma }^{-1}\right\rangle $.
It follows that
 \[{\rho }_{11}\left(a^n\sigma \right)=\left\langle F_1a^n,N_1{\sigma }^{-1}\right\rangle ,\] 
\[{\rho }_{31}\left(a^n\sigma \right)=\left\langle F_3a^n,N_1{\sigma }^{-1}\right\rangle .\]
 

Let us consider $N_1{\sigma }^{-1}$, the image of the standard noodle $N_1$ under the action of $\sigma^{-1}$.
$N_1{\sigma }^{-1}$ is a path in $D_4$ that begins at the base 
point $p_0$ and ends at the point $\left(0,1\right)\in \partial D_4$. By the definition of 
the standard noodle $N_1$ it is clear that $N_1{\sigma }^{-1}$ divides  $D_4$ into two 
components, such that there is one puncture point in one component and three puncture points in the other.
Let us assume that the single point is $p_1$. For example see Figure~8.
\begin{figure}[ht]
  \centering
 \resizebox{150pt}{!}{%
  \begin{tikzpicture}

\filldraw[color=black!60, fill=white, very thick](0,0) circle (2);

\draw[color=red!60, thick,rounded corners=3pt](-2, 0) - - (-2/3-0.3,0) - - (-2/3-0.3,2/3+0.3) - - (2/3+0.3,2/3+0.3) - - (2/3+0.3,-2/3-0.3) - - (-2/3-0.1,-2/3-0.3) - - (-2/3-0.1,-2/3+0.1) - - (-2/3+0.1,-2/3+0.1) - - (-2/3+0.1,-2/3-0.2)  - -
(2/3+0.2,-2/3-0.2) - -(2/3+0.2,2/3+0.2) - - (-2/3-0.1,2/3+0.2)
- - (-2/3-0.1,2/3-0.1) - - (-2/3+0.1,2/3-0.1) - - (-2/3+0.1,2/3+0.1)- - (2/3+0.1,2/3+0.1)- - (2/3+0.1,-2/3-0.1) - - (2/3+0.1,-2/3-0.1) - - (-2/3+0.2,-2/3-0.1)  - - (-2/3+0.2,-2/3+0.2) - - (-2/3-0.2,-2/3+0.2)- - (-2/3-0.2,-2/3-0.4)- - (2/3+0.4,-2/3-0.4)- - (2/3+0.4,2/3+0.4)- - (0,2/3+0.4) - - (0,2)
;

\filldraw [gray] (-2,0) circle (1pt); 
\filldraw [gray] (-2/3,2/3) circle (1pt);    
\filldraw [gray] (2/3,2/3) circle (1pt);    
\filldraw [gray] (2/3,-2/3) circle (1pt);    
\filldraw [gray] (-2/3,-2/3) circle (1pt);


  \end{tikzpicture}
  }
  \caption{$p_1$ is in one component and $p_2,~p_2,~p_3$ are in the other}
  \label{figure 8}
\end{figure}

We intend to define $P$ and $R$ by grouping some terms in the sum originally used to define the representation in
terms of fork/noodle pairing. The pairing is defined as a certain sum (2.1) of terms corresponding to 
crossings between forks and noodles. We will choose some of the crossings to define $P$ and some other to 
define $R$. In order to do this we will need some preparations.

Let $T$ be the boundary of the square whose vertices are the puncture points. We denote the sides with 
$T_1,\ldots,T_4$, where $T_i$ connects $P_i$ with the next crossing (clockwise). We would like to work with 
a fork/noodle arrangement that has certain special properties. We need the pair (of a fork and a noodle) to be irreducible. 
We need the fork to be drawn in the standard way. We need the noodle to intersect $T$ transversally with 
minimum possible number of intersection points. 
And finally we need the tine of the fork
to intersect all segments at which the noodle intersects $T_4$ and $T_2$. While general position arguments show that we can take care of the first three conditions, there is no 
possibility of 
the fourth being satisfied without some further adjustments. Figure~9 shows an example.

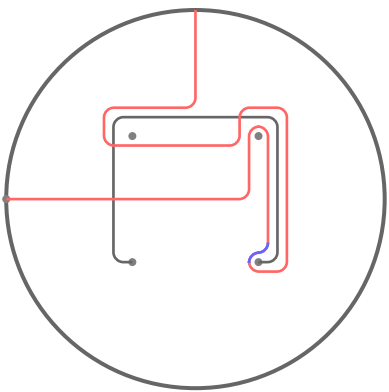
\begin{figure}[ht]
  \centering
  \resizebox{150pt}{!}{%
  \begin{tikzpicture}
  
\filldraw  [color=black!60, fill=white, very thick] (0,0) circle (2);

\filldraw [gray] (-2,0) circle (1pt); 
\filldraw [gray] (-2/3,2/3) circle (1pt);    
\filldraw [gray] (2/3,2/3) circle (1pt);    
\filldraw [gray] (2/3,-2/3) circle (1pt);    
\filldraw [gray] (-2/3,-2/3) circle (1pt);

\draw  [color=black!60, thick, rounded corners=3pt] (-2/3,-2/3) - - (-2/3-0.2,-2/3) - - (-2/3-0.2,2/3+0.2) - -(2/3+0.2,2/3+0.2) - -(2/3+0.2,-2/3) - -(2/3,-2/3);
\draw  [color=red!60, thick, rounded corners=3pt] (0, 2) - - (0,2/3+0.3) - - (-2/3-0.3,2/3+0.3) - -(-2/3-0.3,2/3-0.1) - -(2/3-0.2,2/3-0.1) - -(2/3-0.2,2/3+0.3) - - (2/3+0.3,2/3+0.3) - - (2/3+0.3,-2/3-0.1) - - (2/3-0.1,-2/3-0.1) - - (2/3-0.1,-2/3+0.1) - - (2/3+0.1,-2/3+0.1) - - (2/3+0.1,2/3+0.1)- - (2/3-0.1,2/3+0.1)- - (2/3-0.1,0) - - (-2,0);

\draw  [color=blue!60, thick,rounded corners=3pt] (2/3-0.1,-2/3) - -(2/3-0.1,-2/3+0.1) - - (2/3+0.1,-2/3+0.1) - - (2/3+0.1,-2/3+0.2);

  \end{tikzpicture}
  }
  \caption{The tine $T(F_1)a$ (shown as the black curve) does not intersect the blue segment at which the noodle $N$ (the union of blue and red segments) intersects $T_2$}
  \label{figure 9}
\end{figure}

 \ \
 
 \ \
 
However it is automatically
corrected if we increase the exponent $n$. The effect is just that we add a  number of turns around
 two pairs of punctures. They do not affect the three properties already dealt with and with 
sufficient increase of $n$ we obtain the fourth property. So we are interested in strings between puncture points which has transversal intersection with  $T$ (see Figure~10). 
\begin{figure}[ht]
  \centering
 \resizebox{150pt}{!}{%
  \begin{tikzpicture}

\filldraw[color=black!60, fill=white, very thick](0,0) circle (2);

\draw[color=black!60, dashed] (-2/3-0.4, 2/3) arc (180:90:0.4);
\draw[color=black!60, dashed] (-2/3, 2/3+0.4) - -  (2/3, 2/3+0.4);
\draw[color=black!60, dashed] (2/3, 2/3+0.4) arc (90:0:0.4);
\draw[color=black!60, dashed] (2/3+0.4, 2/3) - -  (2/3+0.4, -2/3);
\draw[color=black!60, fill=white, dashed] (2/3+0.4, -2/3) arc (0:-90:0.4);
\draw[color=black!60, dashed] (2/3, -2/3-0.4) - -  (-2/3, -2/3-0.4);
\draw[color=black!60, fill=white, dashed] (-2/3, -2/3-0.4) arc (-90:-180:0.4);
\draw[color=black!60, dashed] (-2/3-0.4, -2/3) - -  (-2/3-0.4, 2/3);

\draw[color=black!60, dashed] (-2/3+0.2, 2/3-0.4) arc (180:90:0.2);
\draw[color=black!60, dashed] (-2/3+0.4, 2/3-0.2) - -  (2/3-0.4, 2/3-0.2);
\draw[color=black!60, dashed] (2/3-0.4, 2/3-0.2) arc (90:0:0.2);
\draw[color=black!60, dashed] (2/3-0.2, 2/3-0.4) - -  (2/3-0.2, -2/3+0.4);
\draw[color=black!60, dashed] (2/3-0.2, -2/3+0.4) arc (0:-90:0.2);
\draw[color=black!60, dashed] (2/3-0.4, -2/3+0.2) - -  (-2/3+0.4, -2/3+0.2);
\draw[color=black!60, dashed] (-2/3+0.4, -2/3+0.2) arc (-90:-180:0.2);
\draw[color=black!60, dashed] (-2/3+0.2, -2/3+0.4) - -  (-2/3+0.2, 2/3-0.4);

\draw[color=red!60, thick] (-2/3-0.4, 0.2) - -  (-2/3+0.2, 0.2);
\draw[color=red!60, thick] (-2/3-0.4, -0.2) - -  (-2/3+0.2, -0.2);
\draw[color=red!60, thick] (2/3-0.2, 0.2) - -  (2/3+0.4, 0.2);
\draw[color=red!60, thick] (2/3-0.2, -0.2) - -  (2/3+0.4, -0.2);

\draw[color=red!60, thick] (-0.2, 2/3+0.4) - -  (-0.2, 2/3-0.2);
\draw[color=red!60, thick] (0.2, 2/3+0.4) - -  (0.2, 2/3-0.2);
\draw[color=red!60, thick] (-0.2, -2/3+0.2) - -  (-0.2, -2/3-0.4);
\draw[color=red!60, thick] (0.2, -2/3+0.2) - -  (0.2, -2/3-0.4);

\filldraw [gray] (-2,0) circle (1.5pt); 
\filldraw [gray] (-2/3-0.15,2/3+0.15) circle (1pt);    
\filldraw [gray] (2/3+0.15,2/3+0.15) circle (1pt);    
\filldraw [gray] (2/3+0.15,-2/3-0.15) circle (1pt);    
\filldraw [gray] (-2/3-0.15,-2/3-0.15) circle (1pt);

\filldraw [red] (-2/3,0) circle (0.5pt);
\filldraw [red] (2/3,0) circle (0.5pt);
\filldraw [red] (-2/3,0.1) circle (0.5pt);
\filldraw [red] (2/3,0.1) circle (0.5pt);
\filldraw [red] (-2/3,-0.1) circle (0.5pt);
\filldraw [red] (2/3,-0.1) circle (0.5pt);

\filldraw [red] (-0.1,2/3) circle (0.5pt);
\filldraw [red] (0,2/3) circle (0.5pt);
\filldraw [red] (0.1,2/3) circle (0.5pt);
\filldraw [red] (-0.1, -2/3) circle (0.5pt);
\filldraw [red] (0, -2/3) circle (0.5pt);
\filldraw [red] (0.1, -2/3) circle (0.5pt);

\draw[color=black!60, thick] (-2/3-0.15, 2/3+0.15) - - (2/3+0.15, 2/3+0.15);
\draw[color=black!60, thick] (2/3+0.15, 2/3+0.15) - - (2/3+0.15, -2/3-0.15);
\draw[color=black!60,  thick] (2/3+0.15, -2/3-0.15) - - (-2/3-0.15, -2/3-0.15);
\draw[color=black!60, thick] (-2/3-0.15, -2/3-0.15) - - (-2/3-0.15, 2/3+0.15);

\node[above] at (0, 2/3) {$T_1$};
\node[right] at (2/3,0) {$T_2$};
\node[below] at (0,-2/3-0.1) {$T_3$};
\node[left] at (-2/3, 0) {$T_4$};

\filldraw [blue] (-0.15-2/3,0.2) circle (1pt); 
\filldraw [blue] (-0.15-2/3,-0.2) circle (1pt); 
\filldraw [blue] (0.15+2/3,0.2) circle (1pt); 
\filldraw [blue] (0.15+2/3,-0.2) circle (1pt);
\filldraw [blue] (-0.2,2/3+0.15) circle (1pt); 
\filldraw [blue] (0.2,2/3+0.15) circle (1pt);
\filldraw [blue] (-0.2,-2/3-0.15) circle (1pt); 
\filldraw [blue] (0.2,-2/3-0.15) circle (1pt);


  \end{tikzpicture}
  }
  \caption{}
  \label{figure 10}
\end{figure}

Note that it is possible to be no such string between $p_2$ and $p_3$ or $p_3$ and $p_4$, but by our assumption ($p_1$ 
is in the first component) there is an odd number of strings between  $p_1$ and $p_2$ and an 
odd number between $p_1$ and $p_4$ (this guaranties that $P$ 
and $Q$ are not zero).

The pictures of $T\left(F_1\right)a^n$ and $T\left(F_3\right)a^n$ are as given in Figure~11. Therefore, they differ from each other by just one string and by the direction. The number of strings around $p_1$, $p_4$ and $p_2$, $p_3$ is $n$ for $T\left(F_1\right)a^n$ and $n-1$ for $T\left(F_3\right)a^n$.

\begin{figure}[ht]
  \centering
\resizebox{330pt}{!}{%
  \begin{tikzpicture}
  
\filldraw  [color=black!60, fill=white, very thick] (-2.5,0) circle (2);

\filldraw [gray] (-2-2.5,0) circle (1.5pt); 
\filldraw [gray] (-2/3-2.5,2/3) circle (1pt);    
\filldraw [gray] (2/3-2.5,2/3) circle (1pt);    
\filldraw [gray] (2/3-2.5,-2/3) circle (1pt);    
\filldraw [gray] (-2/3-2.5,-2/3) circle (1pt);

\draw  [color=black!60, thick, rounded corners=3pt] (-2/3-2.5,2/3) - - (-2/3-2.5+0.1,2/3) - - (-2/3-2.5+0.1,-2/3-0.1) - -(-2/3-2.5-0.1,-2/3-0.1) - -(-2/3-2.5-0.1,2/3+0.1) - -(-2/3-2.5+0.2,2/3+0.1) - -(-2/3-2.5+0.2,0.2);
\draw  [color=black!60, thick, rounded corners=3pt] (-2/3-2.5+0.2,-0.2) - - (-2/3-2.5+0.2,-2/3-0.2) - - (-2/3-2.5-0.2,-2/3-0.2) - - (-2/3-2.5-0.2,-0.2);
\draw  [color=black!60, thick, rounded corners=3pt] (-2/3-2.5-0.2,0.2) - - (-2/3-2.5-0.2,2/3+0.2) - - (-2/3-2.5+0.3,2/3+0.2) - - (-2/3-2.5+0.3,-2/3-0.3) - - (-2/3-2.5-0.3,-2/3-0.3) - - (-2/3-2.5-0.3,2/3+0.3) - - (2/3-2.5+0.3,2/3+0.3) - - (2/3-2.5+0.3,-2/3-0.3)- - (2/3-2.5-0.3,-2/3-0.3) - - (2/3-2.5-0.3,2/3+0.2) - - (2/3-2.5+0.2,2/3+0.2)- - (2/3-2.5+0.2,0.2);
\draw  [color=black!60, thick, rounded corners=3pt] (2/3-2.5+0.2,-0.2) - - (2/3-2.5+0.2,-2/3-0.2) - - (2/3-2.5-0.2,-2/3-0.2) - - (2/3-2.5-0.2,-0.2);
\draw  [color=black!60, thick, rounded corners=3pt] (2/3-2.5-0.2,0.2) - - (2/3-2.5-0.2,2/3+0.1) - - (2/3-2.5+0.1,2/3+0.1) - - (2/3-2.5+0.1,-2/3-0.1) - - (2/3-2.5-0.1,-2/3-0.1)- - (2/3-2.5-0.1,2/3) - - (2/3-2.5,2/3);
\draw  [color=black!60, thick, ->] (-2.5-0.2,2/3+0.3) - - (-2.5+0.2,2/3+0.3);

\filldraw [gray] (-2/3-2.5-0.2,0) circle (0.5pt);
\filldraw [gray] (-2/3-2.5-0.2,0.1) circle (0.5pt);
\filldraw [gray] (-2/3-2.5-0.2,-0.1) circle (0.5pt);
\filldraw [gray] (-2/3-2.5+0.2,0) circle (0.5pt);
\filldraw [gray] (-2/3-2.5+0.2,0.1) circle (0.5pt);
\filldraw [gray] (-2/3-2.5+0.2,-0.1) circle (0.5pt);

\filldraw [gray] (2/3-2.5-0.2,0) circle (0.5pt);
\filldraw [gray] (2/3-2.5-0.2,0.1) circle (0.5pt);
\filldraw [gray] (2/3-2.5-0.2,-0.1) circle (0.5pt);
\filldraw [gray] (2/3-2.5+0.2,0) circle (0.5pt);
\filldraw [gray] (2/3-2.5+0.2,0.1) circle (0.5pt);
\filldraw [gray] (2/3-2.5+0.2,-0.1) circle (0.5pt);

\filldraw  [color=black!60, fill=white, very thick] (2.5,0) circle (2);

\filldraw [gray] (-2+2.5,0) circle (1.5pt); 
\filldraw [gray] (-2/3+2.5,2/3) circle (1pt);    
\filldraw [gray] (2/3+2.5,2/3) circle (1pt);    
\filldraw [gray] (2/3+2.5,-2/3) circle (1pt);    
\filldraw [gray] (-2/3+2.5,-2/3) circle (1pt);

\draw  [color=black!60, thick, rounded corners=3pt] (-2/3+2.5,-2/3) - - (-2/3+2.5-0.1,-2/3) - -(-2/3+2.5-0.1,2/3+0.1) - -(-2/3+2.5+0.2,2/3+0.1) - -(-2/3+2.5+0.2,0.2);
\draw  [color=black!60, thick, rounded corners=3pt] (-2/3+2.5+0.2,-0.2) - - (-2/3+2.5+0.2,-2/3-0.2) - - (-2/3+2.5-0.2,-2/3-0.2) - - (-2/3+2.5-0.2,-0.2);
\draw  [color=black!60, thick, rounded corners=3pt] (-2/3+2.5-0.2,0.2) - - (-2/3+2.5-0.2,2/3+0.2) - - (-2/3+2.5+0.3,2/3+0.2) - - (-2/3+2.5+0.3,-2/3-0.3) - - (-2/3+2.5-0.3,-2/3-0.3) - - (-2/3+2.5-0.3,2/3+0.3) - - (2/3+2.5+0.3,2/3+0.3) - - (2/3+2.5+0.3,-2/3-0.3)- - (2/3+2.5-0.3,-2/3-0.3) - - (2/3+2.5-0.3,2/3+0.2) - - (2/3+2.5+0.2,2/3+0.2)- - (2/3+2.5+0.2,0.2);
\draw  [color=black!60, thick, rounded corners=3pt] (2/3+2.5+0.2,-0.2) - - (2/3+2.5+0.2,-2/3-0.2) - - (2/3+2.5-0.2,-2/3-0.2) - - (2/3+2.5-0.2,-0.2);
\draw  [color=black!60, thick, rounded corners=3pt] (2/3+2.5-0.2,0.2) - - (2/3+2.5-0.2,2/3+0.1) - - (2/3+2.5+0.1,2/3+0.1) - - (2/3+2.5+0.1,-2/3) - - (2/3+2.5,-2/3);

\draw  [color=black!60, thick, ->] (2.5+0.2,2/3+0.3) - - (2.5-0.2,2/3+0.3);

\filldraw [gray] (-2/3+2.5-0.2,0) circle (0.5pt);
\filldraw [gray] (-2/3+2.5-0.2,0.1) circle (0.5pt);
\filldraw [gray] (-2/3+2.5-0.2,-0.1) circle (0.5pt);
\filldraw [gray] (-2/3+2.5+0.2,0) circle (0.5pt);
\filldraw [gray] (-2/3+2.5+0.2,0.1) circle (0.5pt);
\filldraw [gray] (-2/3+2.5+0.2,-0.1) circle (0.5pt);

\filldraw [gray] (2/3+2.5-0.2,0) circle (0.5pt);
\filldraw [gray] (2/3+2.5-0.2,0.1) circle (0.5pt);
\filldraw [gray] (2/3+2.5-0.2,-0.1) circle (0.5pt);
\filldraw [gray] (2/3+2.5+0.2,0) circle (0.5pt);
\filldraw [gray] (2/3+2.5+0.2,0.1) circle (0.5pt);
\filldraw [gray] (2/3+2.5+0.2,-0.1) circle (0.5pt);

  \end{tikzpicture}
}
  \caption{}
  \label{figure 11}
\end{figure}

If we take curves $T\left(F_1\right)a$ and $N_1{\sigma }^{-1}$ in the same $D_4$ and assume that their intersection is transversal, then it is possible that $T\left(F_1\right)a$ does not intersect all strings between $p_1$ and $p_4$ or $p_2$ and  $p_3$. For example see Figure~9.

\begin{figure}[ht]
  \centering
  \resizebox{330pt}{!}{%
  \begin{tikzpicture}
  
\filldraw  [color=black!60, fill=white, very thick] (-2.5,0) circle (2);

\filldraw [gray] (-2-2.5,0) circle (1.5pt); 
\filldraw [gray] (-2/3-2.5,2/3) circle (1pt);    
\filldraw [gray] (2/3-2.5,2/3) circle (1pt);    
\filldraw [gray] (2/3-2.5,-2/3) circle (1pt);    
\filldraw [gray] (-2/3-2.5,-2/3) circle (1pt);

\draw  [color=black!60, thick, rounded corners=3pt] (2/3-2.5+0.1,2/3+0.3)- - (2/3-2.5+0.1,-2/3-0.1) - - (2/3-2.5-0.1,-2/3-0.1)- - (2/3-2.5-0.1,2/3) - - (2/3-2.5,2/3);
\draw  [color=black!60, thick, rounded corners=3pt] (-2/3-2.5-0.1,2/3+0.3)- - (-2/3-2.5-0.1,-2/3-0.1) - - (-2/3-2.5+0.1,-2/3-0.1)- - (-2/3-2.5+0.1,2/3) - - (-2/3-2.5,2/3);

\draw  [color=black!60, thick, ->] (-2/3-2.5-0.1,-0.2)- - (-2/3-2.5-0.1,0.2);
\draw  [color=black!60, thick, ->] (2/3-2.5+0.1,0.2)- - (2/3-2.5+0.1,-0.2);

\draw  [color=red!60, thick] (-2/3-2.5-0.5,0.2) - - (-2/3-2.5+0.5,0.2);
\draw  [color=red!60, thick] (-2/3-2.5-0.5,-0.2) - - (-2/3-2.5+0.5,-0.2);
\draw  [color=red!60, thick] (2/3-2.5-0.5,0.2) - - (2/3-2.5+0.5,0.2);
\draw  [color=red!60, thick] (2/3-2.5-0.5,-0.2) - - (2/3-2.5+0.5,-0.2);

\filldraw [red] (-2/3-2.5,0) circle (0.5pt);
\filldraw [red] (-2/3-2.5,0.1) circle (0.5pt);
\filldraw [red] (-2/3-2.5,-0.1) circle (0.5pt);
\filldraw [red] (2/3-2.5,0) circle (0.5pt);
\filldraw [red] (2/3-2.5,0.1) circle (0.5pt);
\filldraw [red] (2/3-2.5,-0.1) circle (0.5pt);

\filldraw  [color=black!60, fill=white, very thick] (2.5,0) circle (2);

\filldraw [gray] (-2+2.5,0) circle (1.5pt); 
\filldraw [gray] (-2/3+2.5,2/3) circle (1pt);    
\filldraw [gray] (2/3+2.5,2/3) circle (1pt);    
\filldraw [gray] (2/3+2.5,-2/3) circle (1pt);    
\filldraw [gray] (-2/3+2.5,-2/3) circle (1pt);

\draw  [color=black!60, thick, rounded corners=3pt] (-2/3+2.5, -2/3) - - (-2/3+2.5-0.1,-2/3) - - (-2/3+2.5-0.1,2/3+0.3);
\draw  [color=black!60, thick, rounded corners=3pt](2/3+2.5+0.1,2/3+0.3) - - (2/3+2.5+0.1,-2/3) - - (2/3+2.5,-2/3);

\draw  [color=black!60, thick, ->] (-2/3+2.5-0.1,0.2)- - (-2/3+2.5-0.1,-0.2);
\draw  [color=black!60, thick, ->] (2/3+2.5+0.1,-0.2)- - (2/3+2.5+0.1,0.2);

\draw  [color=red!60, thick] (-2/3+2.5-0.5,0.2) - - (-2/3+2.5+0.5,0.2);
\draw  [color=red!60, thick] (-2/3+2.5-0.5,-0.2) - - (-2/3+2.5+0.5,-0.2);
\draw  [color=red!60, thick] (2/3+2.5-0.5,0.2) - - (2/3+2.5+0.5,0.2);
\draw  [color=red!60, thick] (2/3+2.5-0.5,-0.2) - - (2/3+2.5+0.5,-0.2);

\filldraw [red] (-2/3+2.5,0) circle (0.5pt);
\filldraw [red] (-2/3+2.5,0.1) circle (0.5pt);
\filldraw [red] (-2/3+2.5,-0.1) circle (0.5pt);
\filldraw [red] (2/3+2.5,0) circle (0.5pt);
\filldraw [red] (2/3+2.5,0.1) circle (0.5pt);
\filldraw [red] (2/3+2.5,-0.1) circle (0.5pt);

\draw[color=black!60, dashed](-2/3-2.5,0) ellipse (0.5 and 1);
\draw[color=black!60, dashed](2/3-2.5,0) ellipse (0.5 and 1);

\draw[color=black!60, dashed](-2/3+2.5,0) ellipse (0.5 and 1);
\draw[color=black!60, dashed](2/3+2.5,0) ellipse (0.5 and 1);

\filldraw [blue] (-0.1-2/3-2.5,0.2) circle (1pt); 
\filldraw [blue] (-0.1-2/3-2.5,-0.2) circle (1pt); 
\filldraw [blue] (0.1+2/3-2.5,0.2) circle (1pt); 
\filldraw [blue] (0.1+2/3-2.5,-0.2) circle (1pt);

\filldraw [blue] (0.1-2/3-2.5,0.2) circle (1pt); 
\filldraw [blue] (0.1-2/3-2.5,-0.2) circle (1pt); 
\filldraw [blue] (-0.1+2/3-2.5,0.2) circle (1pt); 
\filldraw [blue] (-0.1+2/3-2.5,-0.2) circle (1pt);

\node[below] at (-2/3-2.5,-2/3-0.3) {$U_1$};
\node[below] at (2/3-2.5,-2/3-0.3) {$U_2$};
\node[below] at (-2.5,-2-0.3) {$a$};

\filldraw [blue] (-0.1-2/3+2.5,0.2) circle (1pt); 
\filldraw [blue] (-0.1-2/3+2.5,-0.2) circle (1pt); 
\filldraw [blue] (0.1+2/3+2.5,0.2) circle (1pt); 
\filldraw [blue] (0.1+2/3+2.5,-0.2) circle (1pt);

\node[below] at (-2/3+2.5,-2/3-0.3) {$U_1$};
\node[below] at (2/3+2.5,-2/3-0.3) {$U_2$};
\node[below] at (2.5,-2-0.3) {$b$};

  \end{tikzpicture}
  }
  \caption{}
  \label{figure 12}
\end{figure}

In this case we must take more numbers of $a$'s and finally we will obtain the curves $T\left(F_1\right)a^n$ and $N_1{\sigma }^{-1}$ such that we can find neighborhoods $U_1$ and $U_2$ of  $T_{4}$ and $T_2$ respectively, with the following picture, illustrated in Figure~12a.

In this case for Figure~12a  the polynomial corresponding to intersections 
inside $U_1$ and  $U_2$ can be written as $P(t,t^{-1})(1-t^{-1})$ and 
$R(t,t^{-1})(1-t)$ respectively. 
Let $Q\left(t,t^{-1}\right)={\rho }_{11}\left(a^n\sigma \right)-P\left(t,t^{-1}\right)\left(1-t^{-1}\right)-R(t,t^{-1})(1-t)$, then we have
\[{\rho }_{11}\left(a^n\sigma \right)=P\left(t,t^{-1}\right)\left(1-t^{-1}\right)+Q\left(t,t^{-1}\right)+R\left(t,t^{-1}\right)(1-t).\] 
On the other hand if we look at Figure~12b and keep in mind that directions of  
$T\left(F_1\right)a^n$ and $T\left(F_3\right)a^n$ are different we can say that 
\[{\ \rho }_{31}\left(a^n\sigma \right)=-P\left(t,t^{-1}\right)-Q\left(t,t^{-1}\right)-R\left(t,t^{-1}\right).\]

 After that if we multiply the braid $a^n\sigma $ by $a$ on the left side then we obtain 
the following picture, illustrated  in Figure~13.

\begin{figure}[ht]
  \centering
    \resizebox{330pt}{!}{%
  \begin{tikzpicture}
  
\filldraw  [color=black!60, fill=white, very thick] (-2.5,0) circle (2);

\filldraw [gray] (-2-2.5,0) circle (1.5pt); 
\filldraw [gray] (-2/3-2.5,2/3) circle (1pt);    
\filldraw [gray] (2/3-2.5,2/3) circle (1pt);    
\filldraw [gray] (2/3-2.5,-2/3) circle (1pt);    
\filldraw [gray] (-2/3-2.5,-2/3) circle (1pt);

\draw  [color=black!60, thick, rounded corners=3pt] (2/3-2.5+0.2,2/3+0.25)- - (2/3-2.5+0.2,-2/3-0.1) - - (2/3-2.5-0.1,-2/3-0.1)- - (2/3-2.5-0.1,2/3+0.1) - - (2/3-2.5+0.1,2/3+0.1)- - (2/3-2.5+0.1,-2/3)- - (2/3-2.5,-2/3);
\draw  [color=black!60, thick, rounded corners=3pt] (-2/3-2.5-0.2,2/3+0.25)- - (-2/3-2.5-0.2,-2/3-0.1) - - (-2/3-2.5+0.1,-2/3-0.1)- - (-2/3-2.5+0.1,2/3+0.1) - - (-2/3-2.5-0.1,2/3+0.1)- - (-2/3-2.5-0.1,-2/3)- - (-2/3-2.5,-2/3);

\draw  [color=black!60, thick, ->] (-2/3-2.5-0.2,-0.2)- - (-2/3-2.5-0.2,0.2);
\draw  [color=black!60, thick, ->] (2/3-2.5+0.2,0.2)- - (2/3-2.5+0.2,-0.2);

\draw  [color=red!60, thick] (-2/3-2.5-0.5,0.2) - - (-2/3-2.5+0.5,0.2);
\draw  [color=red!60, thick] (-2/3-2.5-0.5,-0.2) - - (-2/3-2.5+0.5,-0.2);
\draw  [color=red!60, thick] (2/3-2.5-0.5,0.2) - - (2/3-2.5+0.5,0.2);
\draw  [color=red!60, thick] (2/3-2.5-0.5,-0.2) - - (2/3-2.5+0.5,-0.2);

\filldraw [red] (-2/3-2.5,0) circle (0.5pt);
\filldraw [red] (-2/3-2.5,0.1) circle (0.5pt);
\filldraw [red] (-2/3-2.5,-0.1) circle (0.5pt);
\filldraw [red] (2/3-2.5,0) circle (0.5pt);
\filldraw [red] (2/3-2.5,0.1) circle (0.5pt);
\filldraw [red] (2/3-2.5,-0.1) circle (0.5pt);

\draw[color=black!60, dashed](-2/3-2.5,0) ellipse (0.5 and 1);
\draw[color=black!60, dashed](2/3-2.5,0) ellipse (0.5 and 1);

\filldraw  [color=black!60, fill=white, very thick] (2.5,0) circle (2);

\filldraw [gray] (-2+2.5,0) circle (1.5pt); 
\filldraw [gray] (-2/3+2.5,2/3) circle (1pt);    
\filldraw [gray] (2/3+2.5,2/3) circle (1pt);    
\filldraw [gray] (2/3+2.5,-2/3) circle (1pt);    
\filldraw [gray] (-2/3+2.5,-2/3) circle (1pt);

\draw  [color=black!60, thick, rounded corners=3pt] (-2/3+2.5, 2/3) - -(-2/3+2.5+0.1, 2/3)- -(-2/3+2.5+0.1, -2/3-0.1) - -(-2/3+2.5-0.1, -2/3-0.1)- - (-2/3+2.5-0.1,2/3+0.25);
\draw  [color=black!60, thick, rounded corners=3pt](2/3+2.5,2/3) - - (2/3+2.5-0.1,2/3) - -(2/3+2.5-0.1,-2/3-0.1) - - (2/3+2.5+0.1,-2/3-0.1) - - (2/3+2.5+0.1,2/3+0.25);

\draw  [color=black!60, thick, ->] (-2/3+2.5-0.1,0.2)- - (-2/3+2.5-0.1,-0.2);
\draw  [color=black!60, thick, ->] (2/3+2.5+0.1,-0.2)- - (2/3+2.5+0.1,0.2);

\draw  [color=red!60, thick] (-2/3+2.5-0.5,0.2) - - (-2/3+2.5+0.5,0.2);
\draw  [color=red!60, thick] (-2/3+2.5-0.5,-0.2) - - (-2/3+2.5+0.5,-0.2);
\draw  [color=red!60, thick] (2/3+2.5-0.5,0.2) - - (2/3+2.5+0.5,0.2);
\draw  [color=red!60, thick] (2/3+2.5-0.5,-0.2) - - (2/3+2.5+0.5,-0.2);

\filldraw [red] (-2/3+2.5,0) circle (0.5pt);
\filldraw [red] (-2/3+2.5,0.1) circle (0.5pt);
\filldraw [red] (-2/3+2.5,-0.1) circle (0.5pt);
\filldraw [red] (2/3+2.5,0) circle (0.5pt);
\filldraw [red] (2/3+2.5,0.1) circle (0.5pt);
\filldraw [red] (2/3+2.5,-0.1) circle (0.5pt);

\draw[color=black!60, dashed](-2/3+2.5,0) ellipse (0.5 and 1);
\draw[color=black!60, dashed](2/3+2.5,0) ellipse (0.5 and 1);

\filldraw [blue] (-0.2-2/3-2.5,0.2) circle (1pt); 
\filldraw [blue] (-0.2-2/3-2.5,-0.2) circle (1pt);
\filldraw [blue] (-0.1-2/3-2.5,0.2) circle (1pt); 
\filldraw [blue] (-0.1-2/3-2.5,-0.2) circle (1pt); 

\filldraw [blue] (0.1+2/3-2.5,0.2) circle (1pt); 
\filldraw [blue] (0.1+2/3-2.5,-0.2) circle (1pt);
\filldraw [blue] (0.2+2/3-2.5,0.2) circle (1pt); 
\filldraw [blue] (0.2+2/3-2.5,-0.2) circle (1pt);
 
\filldraw [blue] (0.1-2/3-2.5,0.2) circle (1pt); 
\filldraw [blue] (0.1-2/3-2.5,-0.2) circle (1pt); 
\filldraw [blue] (-0.1+2/3-2.5,0.2) circle (1pt); 
\filldraw [blue] (-0.1+2/3-2.5,-0.2) circle (1pt);

\node[below] at (-2/3-2.5,-2/3-0.3) {$U_1$};
\node[below] at (2/3-2.5,-2/3-0.3) {$U_2$};
\node[below] at (-2.5,-2-0.3) {$a$};

\filldraw [blue] (0.1-2/3+2.5,0.2) circle (1pt); 
\filldraw [blue] (0.1-2/3+2.5,-0.2) circle (1pt);
\filldraw [blue] (-0.1-2/3+2.5,0.2) circle (1pt); 
\filldraw [blue] (-0.1-2/3+2.5,-0.2) circle (1pt); 
\filldraw [blue] (0.1+2/3+2.5,0.2) circle (1pt); 
\filldraw [blue] (0.1+2/3+2.5,-0.2) circle (1pt);
\filldraw [blue] (-0.1+2/3+2.5,0.2) circle (1pt); 
\filldraw [blue] (-0.1+2/3+2.5,-0.2) circle (1pt);

\node[below] at (-2/3+2.5,-2/3-0.3) {$U_1$};
\node[below] at (2/3+2.5,-2/3-0.3) {$U_2$};
\node[below] at (2.5,-2-0.3) {$b$};

  \end{tikzpicture}
  }
  \caption{}
  \label{figure 13}
\end{figure}

Therefore we will have
\[{\rho }_{11}\left(a^{n+1}\sigma \right)=P\left(t,t^{-1}\right)\left(1-t^{-1}+t^{-2}\right)+\] 
\[Q\left(t,t^{-1}\right)+R\left(t,t^{-1}\right)\left(1-t^1+t^2\right),\] 
\[{\rho }_{31}\left(a^{n+1}\sigma \right)=-P\left(t,t^{-1}\right)\left(1-t^{-1}\right)-Q\left(t,t^{-1}\right)-R\left(t,t^{-1}\right)\left(1-t^1\right).\] 
Note that the same argument it sufficient to complete the proof.  
\end{proof}

\textbf{Example 1.} Let $\sigma =b^{-1}a^{-1}b$, then for $n=2$ we have
\[{\rho }_{11}\left(a^2b^{-1}a^{-1}b\right)=-t^{-3}+t^{-2}-1+2t-t^2-t^3+2t^4-t^5=\] 
\[t^{-2}\left(1-t^{-1}\right)+\left(-1+t-t^3+t^4\right)\left(1-t\right),\] 
\[{\rho }_{31}\left(a^2b^{-1}a^{-1}b\right)=-t^{-2}+1-t+t^3-t^4=-t^{-2}-\left(-1+t-t^3+t^4\right).\] 
See Figure~14. Let $m=3$, then we can see that

\ \

\ \

 \ \

\begin{figure}[ht]
  \centering
  \resizebox{150pt}{!}{%
  \begin{tikzpicture}

\filldraw[color=black!60, fill=white, very thick](0,0) circle (2);

\filldraw [gray] (-2,0) circle (1.5pt); 
\filldraw [gray] (-2/3,2/3) circle (1pt);    
\filldraw [gray] (2/3,2/3) circle (1pt);    
\filldraw [gray] (2/3,-2/3) circle (1pt);    
\filldraw [gray] (-2/3,-2/3) circle (1pt);

\draw[color=black!60, thick, rounded corners=3pt](-2/3,2/3) - - (-2/3
+0.1,2/3) - - (-2/3
+0.1,-2/3-0.1) - - (-2/3
-0.1,-2/3-0.1)  - - (-2/3
-0.1,2/3+0.1)- - (2/3
+0.1,2/3+0.1)- - (2/3
+0.1,-2/3-0.1)- - (2/3
-0.1,-2/3-0.1)- - (2/3
-0.1,2/3)- - (2/3,2/3);

\draw[color=red!60, thick, rounded corners=3pt](-2,0) - - (-2/3-0.6,0)- - (-2/3-0.6,-2/3-0.6)- - (2/3+0.3,-2/3-0.6) - - (2/3+0.3,-2/3+0.2) - - (2/3-0.3,-2/3+0.2) - - (2/3-0.3,-2/3-0.3) - - (-2/3-0.3,-2/3-0.3) - - (-2/3-0.3,2/3+0.2) - - (2/3+0.2,2/3+0.2) - - (2/3+0.2,2/3-0.1) - - (-2/3-0.2,2/3-0.1) - - (-2/3-0.2,-2/3-0.2) - - (-2/3+0.2,-2/3-0.2) - - (-2/3+0.2,2/3-0.2) - - (2/3+0.3,2/3-0.2) - - (2/3+0.3,2/3+0.3) - - (-2/3-0.4,2/3+0.3) - - (-2/3-0.4,-2/3-0.4)- - (2/3-0.2,-2/3-0.4)- - (2/3-0.2,-2/3+0.1)- - (2/3+0.2,-2/3+0.1)- - (2/3+0.2,-2/3-0.5)- - (-2/3-0.5,-2/3-0.5)- - (-2/3-0.5,2/3+0.4)- - (0, 2/3+0.4)- - (0,2);

\filldraw [blue] (-2/3-0.1,2/3-0.1) circle (1pt);
\filldraw [blue] (2/3+0.1,2/3-0.1) circle (1pt);
\filldraw [blue] (2/3+0.1,2/3-0.2) circle (1pt);
\filldraw [blue] (2/3+0.1,-2/3+0.2) circle (1pt);
\filldraw [blue] (2/3+0.1,-2/3+0.1) circle (1pt);



  \end{tikzpicture}
  }
 \caption{The polynomial corresponding to the marked intersection point on the left side
 is $P(t,t^{-1})=t^2$ and the polynomial corresponding to the marked intersection points on the right
 side is $R(t,t^{-1})=-1+t-t^3+t^4$}
 \label{figure 14}
\end{figure}

\[{\rho }_{11}\left(a^5b^{-1}a^{-1}b\right)=t^{-2}\left(1-t^{-1}+t^{-2}-t^{-3}+t^{-4}\right)+\] 
\[\left(-1+t-t^3+t^4\right)\left(1-t+t^2-t^3+t^4\right)=\] 
\[t^{-6}-t^{-5}+t^{-4}-t^{-3}+t^{-2}-\] 
\[1+2t-2t^2+t^3-t^5+2t^6-2t^7+t^8,\] 
\[{\rho }_{31}\left(a^5b^{-1}a^{-1}b\right)=-t^{-2}\left(1-t^{-1}+t^{-2}-t^{-3}\right)-\] 
\[\left(-1+t-t^3+t^4\right)\left(1-t+t^2-t^3\right)=\] 
\[t^{-5}-t^{-4}+t^{-3}-t^{-2}+\] 
\[1-2t+2t^2-t^3-t^4+2t^5-2t^6+t^7\] 

\textbf{Example 2.} Let $\sigma =ba^{-2}b^{-1}$, then for $n=2$ we have
\[{\rho }_{11}\left(a^2ba^{-2}b^{-1}\right)=t^{-6}-2t^{-5}+t^{-4}+t^{-3}-t^{-2}+t^{-1}=\] 
\[\left(-t^{-5}+t^{-4}-t^{-2}\right)\left(1-t^{-1}\right)+t^{-1},\] 
\[{\rho }_{31}\left(a^2ba^{-2}b^{-1}\right)=t^{-5}-t^{-4}+t^{-2}-t^{-1}=-\left(-t^{-5}+t^{-4}-t^{-2}\right)-t^{-1}.\] 
See Figure~15 Let $m=4$ then we can see that

\begin{figure}[ht]
  \centering
   \resizebox{150pt}{!}{%
  \begin{tikzpicture}

\filldraw[color=black!60, fill=white, thick](0,0) circle (2);

\filldraw [blue] (-2,0) circle (1pt); 
\filldraw [blue] (-2/3,2/3) circle (1pt);    
\filldraw [blue] (2/3,2/3) circle (1pt);    
\filldraw [blue] (2/3,-2/3) circle (1pt);    
\filldraw [blue] (-2/3,-2/3) circle (1pt);

\draw[color=black!60, thick, rounded corners=3pt](-2/3,2/3) - - (-2/3
+0.1,2/3) - - (-2/3
+0.1,-2/3-0.1) - - (-2/3
-0.1,-2/3-0.1)  - - (-2/3
-0.1,2/3+0.2)- - (2/3
+0.1,2/3+0.2)- - (2/3
+0.1,-2/3-0.1)- - (2/3
-0.1,-2/3-0.1)- - (2/3
-0.1,2/3)- - (2/3,2/3);

\draw[color=red!60, thick, rounded corners=3pt](-2,0) - - (-2/3-0.4,0) - - (-2/3-0.4,2/3+0.5) - - (2/3+0.5,2/3+0.5)- - (2/3+0.5,-2/3-0.5)- - (-2/3-0.2,-2/3-0.5) - - (-2/3-0.2,-2/3+0.1) - - (-2/3+0.2,-2/3+0.1) - - (-2/3+0.2,-2/3-0.3) - - (2/3+0.3,-2/3-0.3) - - (2/3+0.3,2/3+0.3)- - (-2/3-0.2,2/3+0.3)- - (-2/3-0.2,2/3-0.1) - - (-2/3+0.2,2/3-0.1) - - (-2/3+0.2,2/3+0.1) - - (2/3+0.2,2/3+0.1) - - (2/3+0.2,-2/3-0.2)- - (-2/3+0.3,-2/3-0.2)- - (-2/3+0.3,-2/3+0.2) - - (-2/3-0.3,-2/3+0.2)- - (-2/3-0.3,-2/3-0.6)- - (2/3+0.6,-2/3-0.6)- - (2/3+0.6,2/3+0.6)- - (0,2/3+0.6)- - (0,2);

\filldraw [blue] (-2/3-0.1,2/3-0.1) circle (1pt);
\filldraw [blue] (-2/3-0.1,-2/3+0.2) circle (1pt);
\filldraw [blue] (-2/3-0.1,-2/3+0.1) circle (1pt);

\filldraw [blue] (2/3+0.1,2/3+0.1) circle (1pt);

  \end{tikzpicture}
  }
  \caption{The polynomial corresponding to the three marked intersection points on the left side is $P(t,t^{-1})=-t^{-5}+t^{-4}-t^{-2}$ and the polynomial corresponding to the single marked intersection point on the right side is the monomial $Q(t,t^{-1})=t^{-1}$}
  \label{figure 15}
\end{figure}
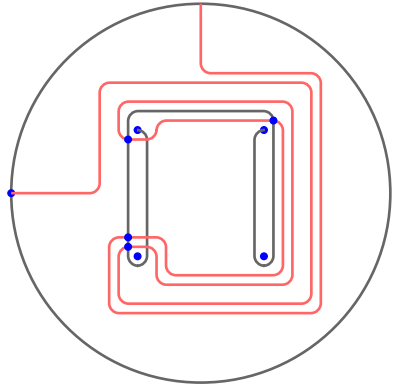

\[{\rho }_{11}\left(a^6ba^{-2}b^{-1}\right)=\left(-t^{-5}+t^{-4}-t^{-2}\right)\left(1-t^{-1}+t^{-2}-t^{-3}+t^{-4}-t^{-5}\right)+t^{-1}=\] 
\[t^{-10}-2t^{-9}+2t^{-8}-t^{-7}+t^{-6}-t^{-5}+t^{-3}-t^{-2}+t^{-1},\] 
\[{\rho }_{13}\left(a^6ba^{-2}b^{-1}\right)=\left(-t^{-5}+t^{-4}-t^{-2}\right)\left(1-t^{-1}+t^{-2}-t^{-3}+t^{-4}\right)+t^{-1}=\] 
\[t^{-9}-2t^{-8}+2t^{-7}-t^{-6}+t^{-5}-t^{-3}+t^{-2}-t^{-1}.\] 

\textbf{Example 3.} Let $\sigma =ab^2ab^{-1}$, then for $n=2$ we have
\[{\rho }_{11}\left(a^2ab^2ab^{-1}\right)=-t^{-6}+2t^{-5}-t^{-4}+t^{-3}+3t^{-2}-3t^{-1}+2-t-t^2+2t^3-t^4=\] 
\[\left(t^{-5}-t^{-4}+t^{-2}-2t^{-1}+1\right)\left(1-t^{-1}\right)\] 
\[\left(-t^{-4}+t^{-3}-2t^{-1}+2-t\right)+\left(1-t^2+t^3\right)\left(1-t\right),\] 
\[{\rho }_{11}\left(a^2ab^2ab^{-1}\right)=-t^{-5}+2t^{-4}-t^{-3}-t^{-2}+4t^{-1}-4+t+t^2-t^3=\] 
\[-\left(t^{-5}-t^{-4}+t^{-2}-2t^{-1}+1\right)-\] 
\[\left(-t^{-4}+t^{-3}-2t^{-1}+2-t\right)-\left(1-t^2+t^3\right).\] 
See Figure~16 Let $m=2$ then we can see that

\begin{figure}[ht]
  \centering
   \resizebox{150pt}{!}{%
  \begin{tikzpicture}

\filldraw[color=black!60, fill=white, very thick](4.2,0) circle (2);

\filldraw [gray] (-2+4.2,0) circle (1.5pt); 
\filldraw [gray] (-2/3+4.2,2/3) circle (1pt);    
\filldraw [gray] (2/3+4.2,2/3) circle (1pt);    
\filldraw [gray] (2/3+4.2,-2/3) circle (1pt);    
\filldraw [gray] (-2/3+4.2,-2/3) circle (1pt);

\draw[color=black!60, thick, rounded corners=3pt](-2/3+4.2,2/3) - - (-2/3+4.2
+0.1,2/3) - - (-2/3+4.2
+0.1,-2/3-0.1) - - (-2/3+4.2
-0.1,-2/3-0.1)  - - (-2/3+4.2
-0.1,2/3+0.2)- - (2/3+4.2
+0.1,2/3+0.2)- - (2/3+4.2
+0.1,-2/3-0.1)- - (2/3+4.2
-0.1,-2/3-0.1)- - (2/3+4.2
-0.1,2/3)- - (2/3+4.2,2/3);

\draw[color=red!60, thick, rounded corners=3pt](-2+4.2,0) - - (-2/3+4.2-0.75,0) - - (-2/3+4.2-0.75,2/3+0.4)- - (-2/3+4.2+0.3,2/3+0.4)- - (-2/3+4.2+0.3,2/3-0.2)- - (-2/3+4.2-0.35,2/3-0.2) - - (-2/3+4.2-0.35,-2/3-0.35)- - (2/3+4.2+0.2,-2/3-0.35)- - (2/3+4.2+0.2,-2/3+0.1)- - (2/3+4.2-0.2,-2/3+0.1)- - (2/3+4.2-0.2,-2/3-0.15)- - (-2/3+4.2-0.15,-2/3-0.15)- - (-2/3+4.2-0.15,-2/3+0.2)- - (2/3+4.2-0.25,-2/3+0.2)- - (2/3+4.2-0.25,2/3+0.35)- - (2/3+4.2+0.45,2/3+0.35)- - (2/3+4.2+0.45,-2/3-0.6)- - (-2/3+4.2-0.6,-2/3-0.6)- - (-2/3+4.2-0.6,2/3+0.25)- - (-2/3+4.2+0.15,2/3+0.25)- - (-2/3+4.2+0.15,2/3-0.05)- - (-2/3+4.2-0.5,2/3-0.05)- - (-2/3+4.2-0.5,-2/3-0.5) - - (2/3+4.2+0.35,-2/3-0.5)- - (2/3+4.2+0.35,2/3+0.25)- - (2/3+4.2-0.15,2/3+0.25)- - (2/3+4.2-0.15,2/3-0.05)- - (2/3+4.2+0.3,2/3-0.05)- - (2/3+4.2+0.3,-2/3-0.45)- - (-2/3+4.2-0.45,-2/3-0.45)- - (-2/3+4.2-0.45,2/3-0.1)- - (-2/3+4.2+0.2,2/3-0.1)- - (-2/3+4.2+0.2,2/3+0.3)- - (-2/3+4.2-0.65,2/3+0.3)- - (-2/3+4.2-0.65,-2/3-0.65)- - (2/3+4.2+0.5,-2/3-0.65)- - (2/3+4.2+0.5,2/3+0.4)- - (2/3+4.2-0.3,2/3+0.4)- - (2/3+4.2-0.3,-2/3+0.25)- - (-2/3+4.2-0.2,-2/3+0.25)- - (-2/3+4.2-0.2,-2/3-0.2) - - (2/3+4.2-0.15,-2/3-0.2)- - (2/3+4.2-0.15,-2/3+0.05)- - (2/3+4.2+0.15,-2/3+0.05)- - (2/3+4.2+0.15,-2/3-0.3)- - (-2/3+4.2-0.3,-2/3-0.3)- - (-2/3+4.2-0.3,2/3-0.25)- - (-2/3+4.2+0.35,2/3-0.25)- - (-2/3+4.2+0.35,2/3+0.45)- - (0+4.2,2/3+0.45)- - (0+4.2,2);

\filldraw [blue] (-2/3+4.2-0.1,2/3-0.05) circle (0.8pt);
\filldraw [blue] (-2/3+4.2-0.1,2/3-0.1) circle (0.8pt);
\filldraw [blue] (-2/3+4.2-0.1,2/3-0.2) circle (0.8pt);
\filldraw [blue] (-2/3+4.2-0.1,2/3-0.25) circle (0.8pt);
\filldraw [blue] (-2/3+4.2-0.1,-2/3+0.25) circle (0.8pt);
\filldraw [blue] (-2/3+4.2-0.1,-2/3+0.17) circle (0.8pt);

\filldraw [blue] (2/3+4.2+0.1,2/3-0.05) circle (0.8pt);
\filldraw [blue] (2/3+4.2+0.1,-2/3+0.1) circle (0.8pt);
\filldraw [blue] (2/3+4.2+0.1,-2/3+0.03) circle (0.8pt);
;

\filldraw [blue] (-2/3+4.2+0.13,2/3+0.2) circle (0.8pt);
\filldraw [blue] (-2/3+4.2+0.2,2/3+0.2) circle (0.8pt);
\filldraw [blue] (-2/3+4.2+0.3,2/3+0.2) circle (0.8pt);
\filldraw [blue] (-2/3+4.2+0.35,2/3+0.2) circle (0.8pt);

\filldraw [blue] (2/3+4.2-0.13,2/3+0.2) circle (0.8pt);
\filldraw [blue] (2/3+4.2-0.25,2/3+0.2) circle (0.8pt);
\filldraw [blue] (2/3+4.2-0.3,2/3+0.2) circle (0.8pt);

  \end{tikzpicture}
  }
  \caption{The polynomial corresponding to the intersection points marked on the left side is $P(t,t^{-1})=t^{-5}-t^{-4}+t^{-2}-2t^{-1}+1$. The polynomial corresponding to the marked intersection points above puncture points is $Q(t,t^{-1})=-t^{-4}+t^{-3}-2t^{-1}+2-t$ and the polynomial corresponding to the marked intersection points on the right side is $R(t,t^{-1})=1-t^2+t^3$}
  \label{figure 16}
\end{figure}
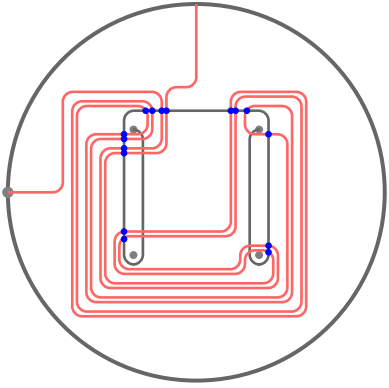

\[{\rho }_{11}\left(a^4ab^2ab^{-1}\right)=\left(t^{-5}-t^{-4}+t^{-2}-2t^{-1}+1\right)\left(1-t^{-1}+t^{-2}-t^{-3}\right)\] 
\[\left(-t^{-4}+t^{-3}-2t^{-1}+2-t\right)+\left(1-t^2+t^3\right)\left(1-t+t^2-t^3\right)=\] 
\[-t^{-8}+2t^{-7}-2t^{-6}+t^{-5}+t^{-4}-3t^{-3}+4t^{-2}-5t^{-1}+4\] 
\[-2t+t^3-2t^4+2t^5-t^6,\] 
\[{\rho }_{13}\left(a^4ab^2ab^{-1}\right)=-\left(t^{-5}-t^{-4}+t^{-2}-2t^{-1}+1\right)\left(1-t^{-1}+t^{-2}\right)-\] 
\[\left(-t^{-4}+t^{-3}-2t^{-1}+2-t\right)-\left(1-t^2+t^3\right)\left(1-t+t^2\right)=\] 
\[-t^{-7}+2t^{-6}-2t^{-5}+t^{-4}+2t^{-3}-4t^{-2}+5t^{-1}-4+2t-2t^3+2t^4-t^5.\]

We formulate a conjecture that describes a certain regularity, experimentally observed for images (matrices) of braids of a special form. For future reference let us state clearly that what we mean by {\sl regularity} is that the $(1,1)$ and 
$(3,1)$ entries in the considered matrix are non--zero Laurent polynomials and that the difference of the degrees of lowest degree terms is equal to $-1$.
 
\begin{conjecture} Let $\ \sigma \in B_4$ be any non--trivial pure braid which is not equivalent to $ \Delta^m,$ for some $m \in \mathbb{N}.$ We assume that $\sigma $ acts non--trivially on $T_4.$ Then there exists a sufficiently large $l_0 \in \mathbb{N}$ with respect to the length of $ \sigma $ and a sufficiently large $m_0\in \mathbb{N}$ such that for each $m > m_0, l \ge l_0$ the difference of the lowest degrees of the polynomials 
${\rho }_{11}(a^m\sigma a^{-l})$ and ${\rho }_{31}(a^m\sigma a^{-l})$ 
is equal to $-1$ and the polynomials are non-zero.
\end{conjecture}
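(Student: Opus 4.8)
The plan is to diagonalise $\rho(a)$ and reduce the whole statement to a single non-vanishing of a matrix coefficient of $\rho(\sigma)$. A direct computation with the matrix $\rho(a)$ recorded above shows that its three column sums are all equal to $-t^{-1}$, so $(1,1,1)$ is a left eigenvector of $\rho(a)$ for the eigenvalue $-t^{-1}$; the remaining eigenvalues are $-t$ and $1$. The corresponding right eigenvectors are $(t,-(t+1),1)^{\mathsf T}$ (for $-t$), $(-1,0,1)^{\mathsf T}$ (for $1$) and $(t^{-1},0,1)^{\mathsf T}$ (for $-t^{-1}$); in particular $(-1,0,1)^{\mathsf T}$ is, up to a scalar, the unique $\rho(a)$-fixed class, which reflects the fact that $a$ acts trivially on $T_4$. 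Writing $V$ for the matrix whose columns are these eigenvectors, in the order $-t,\,1,\,-t^{-1}$, and $D=\mathrm{diag}(-t,1,-t^{-1})$, one checks that $(V^{-1})_{11}=0$ while $(V^{-1})_{21}\ne0$ --- equivalently, $F_1$ lies in the span of the eigenvectors for $1$ and $-t^{-1}$, which is simply the $\rho(a)$-invariance of the submodule $\langle F_1,F_3\rangle$.

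Now expand $\rho(a^m\sigma a^{-l})=\rho(a)^m\rho(\sigma)\rho(a)^{-l}=VD^mV^{-1}\rho(\sigma)VD^{-l}V^{-1}$. Because $(V^{-1})_{j1}=0$ for $j=1$, the $(1,1)$ and $(3,1)$ entries become
\[
\rho_{11}(a^m\sigma a^{-l})=\sum_{i=1}^{3}\sum_{j=2}^{3}c^{(11)}_{ij}\,\mu_i^{\,m}\mu_j^{\,-l},\qquad
\rho_{31}(a^m\sigma a^{-l})=\sum_{i=1}^{3}\sum_{j=2}^{3}c^{(31)}_{ij}\,\mu_i^{\,m}\mu_j^{\,-l},
\]
with $\mu_1=-t,\ \mu_2=1,\ \mu_3=-t^{-1}$, and with $c^{(11)}_{ij}=V_{1i}(V^{-1}\rho(\sigma)V)_{ij}(V^{-1})_{j1}$, $c^{(31)}_{ij}=V_{3i}(V^{-1}\rho(\sigma)V)_{ij}(V^{-1})_{j1}$ fixed rational functions of $t$ regular at $t=0$. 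Each $\mu_i^{\,m}\mu_j^{\,-l}$ is a monomial, and the orders at $t=0$ of the six of them are $m,\,0,\,-m,\,m+l,\,l,\,-m+l$; for $m,l\ge1$ the strict minimum $-m$ is attained only at $(i,j)=(3,2)$. Hence, once $m$ and $l$ exceed bounds that depend only on the orders at $0$ of the finitely many $c^{(\cdot)}_{ij}$ --- bounds controlled by the degrees of the entries of $\rho(\sigma)$, hence by the length of $\sigma$, which is the source of the $l_0$ in the Conjecture --- no cancellation can occur at the bottom degree, so
\[
\deg_{\min}\rho_{11}(a^m\sigma a^{-l})=\mathrm{ord}_0\!\big(c^{(11)}_{32}\big)-m,\qquad
\deg_{\min}\rho_{31}(a^m\sigma a^{-l})=\mathrm{ord}_0\!\big(c^{(31)}_{32}\big)-m,
\]
and both polynomials are non-zero, provided $c^{(11)}_{32}$ and $c^{(31)}_{32}$ are non-zero. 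Their quotient is $V_{13}/V_{33}=t^{-1}/1=t^{-1}$ (the shared factor $(V^{-1}\rho(\sigma)V)_{32}(V^{-1})_{21}$ cancels), so in that case the difference of the two lowest degrees equals $\mathrm{ord}_0(t^{-1})=-1$, which is precisely the Conjecture.

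Everything is thus reduced to the single assertion $(V^{-1}\rho(\sigma)V)_{32}\ne0$, equivalently (since $(V^{-1})_{21}\ne0$)
\[
(1,1,1)\,\rho(\sigma)\,(-1,0,1)^{\mathsf T}\;=\;c_3(\sigma)-c_1(\sigma)\;\ne\;0,
\]
the difference of the third and first column sums of the Burau matrix of $\sigma$. From $(1,1,1)\rho(a)=-t^{-1}(1,1,1)$ and $\rho(a)(-1,0,1)^{\mathsf T}=(-1,0,1)^{\mathsf T}$ one gets $(1,1,1)\rho(a^i\sigma a^j)(-1,0,1)^{\mathsf T}=(-t^{-1})^{i}\,(1,1,1)\rho(\sigma)(-1,0,1)^{\mathsf T}$, so this quantity depends on $\sigma$ only modulo left and right multiplication by powers of $a$, consistently with the hypotheses. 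It is a noodle--fork matrix coefficient attached to the side $T_4$: $(-1,0,1)^{\mathsf T}$ is the $\rho(a)$-fixed class, realised by a fork with tine an arc between $p_1$ and $p_4$. One checks that it vanishes whenever $\rho(\sigma)$ preserves the line $\langle(-1,0,1)^{\mathsf T}\rangle$ --- in particular whenever $\sigma$ acts trivially on $T_4$ --- and also for $\sigma=\Delta^m$ and its conjugates, where $\rho(\Delta^{2k})$ is scalar and $\rho(\Delta^{2k+1})$ degenerates in the same way, so that the geometric action on $T_4$ is present but invisible to this pairing. These are exactly the cases excluded in the statement.

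The hard part --- and the reason the statement remains a conjecture --- is to prove this non-vanishing, i.e.\ that for a pure braid $\sigma$ not conjugate to any $\Delta^m$ and acting non-trivially on $T_4$ one necessarily has $c_3(\sigma)\ne c_1(\sigma)$: in other words, that the column-sum invariant detects the action on $T_4$ outside the $\Delta^m$-family. I would attack this with forks and noodles: represent $(-1,0,1)^{\mathsf T}$ by a fork $G$ whose tine is an arc between $p_1$ and $p_4$, represent the relevant relative class by a noodle $N$ separating one of $p_1,p_4$ from the other, bring the pair $(G\sigma,N)$ into an irreducible position exactly as in the proof of Lemma~4.1, and analyse the signed, $t$-weighted intersection count (2.1); the claim is that a complete cancellation of this sum forces $\sigma(T_4)$ to be isotopic to $T_4$ unless $\sigma$ has the rigid $\Delta^m$-type form. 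Making this rigidity precise and uniform in $\sigma$ is the genuine obstacle, and the experimental data quoted in the paper is evidence that no further cancellations ever occur.
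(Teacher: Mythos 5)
You were asked to prove a statement that the paper itself only \emph{conjectures}: the paper supports Conjecture~4.2 with the fork--and--noodle decomposition of Lemma~4.1 (writing $\rho_{11}(a^n\sigma)=P(1-t^{-1})+Q+R(1-t)$ and $\rho_{31}(a^n\sigma)=-P-Q-R$, with the geometric--sum formulas in $m$), with a heuristic stability argument for the lowest degrees of $P,Q,R$ as $l$ grows (the displays $(*)$ and $(**)$), and with computer experiments --- but it gives no proof. Your proposal is likewise not a proof, and you say so explicitly; judged as a \emph{reduction} it is correct and genuinely different from, and in places sharper than, what the paper does. Your linear algebra checks out: the eigenvalues of $\rho(a)$ are $-t,1,-t^{-1}$ with the eigenvectors you list, $(V^{-1})_{11}=0$ while $(V^{-1})_{21}\ne 0$, the six valuations $m,0,-m,m+l,l,l-m$ are as claimed with $-m$ the unique minimum, and the ratio $V_{13}/V_{33}=t^{-1}$ is exactly what produces the $-1$. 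Your diagonalization makes algebraically transparent what the paper extracts geometrically: the paper's $P$, $R$ and $Q$ are, up to explicit units, the components along the $-t^{-1}$, $-t$ and $1$ eigendirections, and the sums $S_m(t^{\pm 1})$ of Lemma~4.1(2) are just $\bigl(1-(-t^{\mp1})^{m+1}\bigr)/(1+t^{\mp1})$. Your single condition $(1,1,1)\,\rho(\sigma)\,(-1,0,1)^{\mathsf T}\ne 0$ also packages in one statement the two separate things the paper needs --- that $P\ne 0$ (Lemma~4.1(3)) \emph{and} that its lowest degree is stable as $l$ increases --- and it makes the required uniformity of $m_0$ and $l_0$ completely explicit, which the paper's induction on $l$ does not.

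The genuine gap, in your proposal exactly as in the paper, is the non-vanishing itself: you reduce the Conjecture to the claim that for a pure $\sigma$, not conjugate to a power of $\Delta$ and acting non-trivially on $T_4$, the third and first column sums of $\rho(\sigma)$ differ; the paper reduces it to the claim that an odd number of strings of $N_1\sigma^{-1}$ across $T_4$ forces $P\ne 0$ with $l$-stable lowest degree. Neither claim is established, and your proposed attack on it (complete cancellation in the pairing should force $\sigma(T_4)$ isotopic to $T_4$ or $\sigma$ of $\Delta^m$ type) is essentially the same rigidity the paper hopes for, so you have relocated the difficulty rather than removed it. Two minor points: the coefficients $c^{(\cdot)}_{ij}$ need not be regular at $t=0$ (the entries of $V$ and $V^{-1}$ carry powers of $t^{-1}$ and the denominator $\det V=-(t+1)^2/t$), but all your argument uses is that each has a well-defined finite order at $t=0$, so this is harmless; and your formula $\rho(a^m\sigma a^{-l})=\rho(a)^m\rho(\sigma)\rho(a)^{-l}$ silently fixes the homomorphism (rather than anti-homomorphism) convention for the right action $F_i\mapsto F_i\sigma$ --- it is the convention consistent with Lemma~4.1, but it deserves a sentence of justification since rows and columns trade places under the other convention.
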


While experimental data suggest that the Conjecture is true as formulated, we are really interested in the situation when $\sigma$ is a product of Bokut--Vesnin generators. Therefore we may refer to the length of $\sigma$, meaning the length of
$\sigma$ as a reduced word in $a,b,a^{-1},b^{-1}$.
Now, we give some arguments showing why we expect the Conjecture to be true. Take a sufficiently large $l_0 \in \mathbb{N}$ with respect to the length of $ \sigma $. Consider the curves $N_1a^{l_0}$ and neighborhood  $U_1$ of  $T_4$ inside of which it looks~as in Figure~17:

\begin{figure}[ht]
  \centering
  \resizebox{150pt}{!}{%
  \begin{tikzpicture}
  
\filldraw  [color=black!60, fill=white, very thick] (0,0) circle (2);

\filldraw [gray] (-2,0) circle (1.5pt); 
\filldraw [gray] (-2/3,2/3) circle (1pt);    
\filldraw [gray] (2/3,2/3) circle (1pt);    
\filldraw [gray] (2/3,-2/3) circle (1pt);    
\filldraw [gray] (-2/3,-2/3) circle (1pt);

\draw[color=black!60, dashed](-2/3,0) ellipse (0.85 and 1.45);

\draw  [color=red!60, thick, rounded corners=3pt] (-1.55,0) - - (-2/3-0.5,0) - - (-2/3-0.5,-2/3-0.4)- - (-2/3+0.425,-2/3-0.4)- - (-2/3+0.425,2/3+0.35)- - (-2/3-0.35,2/3+0.35)- - (-2/3-0.35,-2/3-0.25)- - (-2/3+0.275,-2/3-0.25)- - (-2/3+0.275,2/3+0.2)- - (-2/3-0.2,2/3+0.2)- - (-2/3-0.2,0.2) ;
\draw  [color=red!60, thick, rounded corners=2pt] (-2/3-0.2,-0.2) - - (-2/3-0.2,-2/3-0.125)- - (-2/3+0.15,-2/3-0.125)- - (-2/3+0.15,2/3+0.075)- - (-2/3-0.1,2/3+0.075)- - (-2/3-0.1,2/3-0.075)- - (-2/3+0.1,2/3-0.075)- - (-2/3+0.1,-2/3-0.075)- - (-2/3-0.15,-2/3-0.075)- - (-2/3-0.15,-0.2)

;
\draw  [color=red!60, thick, rounded corners=3pt]  (-2/3-0.15,0.2) - - (-2/3-0.15,2/3+0.15)- - (-2/3+0.225,2/3+0.15)- - (-2/3+0.225,-2/3-0.2)- - (-2/3-0.3,-2/3-0.2)- - (-2/3-0.3,2/3+0.3)- - (-2/3+0.375,2/3+0.3)- - (-2/3+0.375,-2/3-0.35)- - (-2/3-0.45,-2/3-0.35)- - (-2/3-0.45,0.05)- - (-1.55, 0.05);

\filldraw [red] (-2/3-0.175,0.1) circle (0.5pt);
\filldraw [red] (-2/3-0.175,0) circle (0.5pt);
\filldraw [red] (-2/3-0.175,-0.1) circle (0.5pt);
  \end{tikzpicture}
  }
\caption{}
\label{figure 17}
\end{figure}

Apply to the curve $N_1a^{l_0}$ the transformation corresponding to the braid $ \sigma ^{-1} $. Note that $ l_0$ is sufficiently large with respect to the length of $ \sigma ^{-1}$ and so in the curve $N_1a^{l_0} \sigma ^{-1}$ almost all parallel lines to line $T_4$ are followed by the curve $T_4 \sigma ^{-1}$. On the other hand the transformation corresponding to the braid $ \sigma ^{-1} $ acts non--trivially on the line $T_4$. Therefore the final image $N_1a^{l_0} \sigma ^{-1}$ does not have problematic strings around $T_4$, as in Figure~18:

\begin{figure}[ht]
  \centering
  \resizebox{320pt}{!}{%
  \begin{tikzpicture}
  
\filldraw  [color=black!60, fill=white, very thick] (3,0) circle (2);

\filldraw [gray] (-2+3,0) circle (1pt); 
\filldraw [gray] (-2/3+3,2/3) circle (1pt);    
\filldraw [gray] (2/3+3,2/3) circle (1pt);    
\filldraw [gray] (2/3+3,-2/3) circle (1pt);    
\filldraw [gray] (-2/3+3,-2/3) circle (1pt);

\draw[color=black!60, dashed](-2/3+3,0) ellipse (0.5 and 1);
\draw[color=black!60, dashed](2/3+3,0) ellipse (0.5 and 1);

\draw  [color=red!60, thick, rounded corners=2pt]  (-2/3+3-0.15,-0.15) - - (-2/3+3-0.15,-2/3-0.125)- - (-2/3+3+0.15,-2/3-0.125)- - (-2/3+3+0.15,2/3+0.075)- - (-2/3+3-0.1,2/3+0.075)- - (-2/3+3-0.1,2/3-0.075)- - (-2/3+3+0.1,2/3-0.075)- - (-2/3+3+0.1,-2/3-0.075)- - (-2/3+3-0.1,-2/3-0.075)- - (-2/3+3-0.1,-0.15);

\filldraw [red] (-2/3+3-0.125,0.1) circle (0.5pt);
\filldraw [red] (-2/3+3-0.125,0) circle (0.5pt);
\filldraw [red] (-2/3+3-0.125,-0.1) circle (0.5pt);

\filldraw  [color=black!60, fill=white, very thick] (-3,0) circle (2);

\filldraw [gray] (-2-3,0) circle (1pt); 
\filldraw [gray] (-2/3-3,2/3) circle (1pt);    
\filldraw [gray] (2/3-3,2/3) circle (1pt);    
\filldraw [gray] (2/3-3,-2/3) circle (1pt);    
\filldraw [gray] (-2/3-3,-2/3) circle (1pt);

\draw[color=black!60, dashed](-2/3-3,0) ellipse (0.5 and 1);
\draw[color=black!60, dashed](2/3-3,0) ellipse (0.5 and 1);

\draw  [color=red!60, thick, rounded corners=2pt]   (-2/3-3+0.15,0.15) - - (-2/3-3+0.15,2/3+0.15)- - (-2/3-3-0.125,2/3+0.15)- - (-2/3-3-0.125,-2/3-0.075)- - (-2/3-3+0.1,-2/3-0.075)- - (-2/3-3+0.1,-2/3+0.075)- - (-2/3-3-0.075,-2/3+0.075)- - (-2/3-3-0.075,2/3+0.1) - -(-2/3-3+0.1,2/3+0.1)- -(-2/3-3+0.1,0.15);

\filldraw [red] (-2/3-3+0.125,0.1) circle (0.5pt);
\filldraw [red] (-2/3-3+0.125,0) circle (0.5pt);
\filldraw [red] (-2/3-3+0.125,-0.1) circle (0.5pt);
  \end{tikzpicture}
  }
  \caption{}
  \label{figure 16}
\end{figure}

In general, if we take any braid $\sigma$, then $F_1\sigma$ may have the strings in the form illustrated in Figure~18. For example if $\sigma=b^{-1}ab^{-1}$ or $\sigma=a^3$ then the corresponding curves are shown in Figure~19.

Because the curve $N_1a^{l_0} \sigma ^{-1}$  does not have any problematic strings for $n=3$ the intersection of curves $F_1a^3$ and $N_1a^{l_0} \sigma ^{-1}$  inside the neighbourhoods $U_1$ and $U_2$ of $T_4$ and $T_2$ looks as in Figure~13. So the ${\rho }_{11}(a^{3}\sigma a^{-{l_o}})$ and ${\rho }_{31}(a^{3}\sigma a^{-{l_0}})$  entries of the Burau matrix $\rho \left(a^n\sigma a^{-{l_0}}\right)$ can be written as 
\[{\rho }_{11}\left(a^3\sigma a^{-{l_0}}\right)=P\left(t,t^{-1}\right)\left(1-t^{-1}\right)+Q\left(t,t^{-1}\right)+R\left(t,t^{-1}\right)(1-t),\] 
\[{\ \rho }_{31}\left(a^3\sigma a^{-{l_0}}\right)=-P\left(t,t^{-1}\right)-Q\left(t,t^{-1}\right)-R\left(t,t^{-1}\right),\] 
where polynomial $P\left(t,t^{-1}\right)$ is not zero and for each $m' \in \mathbb{N}$ we have\\
$\begin{array}{lclll}
{\rho }_{11}\left(a^{3+m'}\sigma a^{-{l_0}}\right)&=&\hphantom{-}P\left(t,t^{-1}\right)\left(S_{m'+1}(t^{-1})\right)&+
Q\left(t,t^{-1}\right)\cr
&&+R\left(t,t^{-1}\right)\left(S_{m'+1}(t)\right),\cr
{\rho }_{31}\left(a^{3+m'}\sigma a^{-l_0}\right)&=&-P\left(t,t^{-1}\right)\left(S_{m'}(t^{-1})\right)&-
Q\left(t,t^{-1}\right)\cr
&&-R\left(t,t^{-1}\right)\left(S_{m'}(t)\right).\cr
\end{array}
$
\begin{figure}[ht]
  \centering
  \resizebox{320pt}{!}{%
  \begin{tikzpicture}
  
\filldraw  [color=black!60, fill=white, very thick] (-3,0) circle (2);

\filldraw [gray] (-2-3,0) circle (1pt); 
\filldraw [gray] (-2/3-3,2/3) circle (1pt);    
\filldraw [gray] (2/3-3,2/3) circle (1pt);    
\filldraw [gray] (2/3-3,-2/3) circle (1pt);    
\filldraw [gray] (-2/3-3,-2/3) circle (1pt);

\draw  [color=red!60, thick,  rounded corners=2pt]  (-2-3,0) - -  (-2/3-3-0.4,0) - -  (-2/3-3-0.4,2/3+0.4)- -  (2/3-3+0.1,2/3+0.4)- -  (2/3-3+0.1,2/3-0.1)- -  (-2/3-3+0.2,2/3-0.1)- -  (-2/3-3+0.2,2/3+0.2)- -  (-2/3-3-0.2,2/3+0.2)- -  (-2/3-3-0.2,-2/3-0.1)- -  (-2/3-3+0.1,-2/3-0.1)- -  (-2/3-3+0.1,-2/3+0.1)- -  (-2/3-3-0.1,-2/3+0.1)- -  (-2/3-3-0.1,2/3+0.1)- -  (-2/3-3+0.1,2/3+0.1)- -  (-2/3-3+0.1,2/3-0.2)- -  (2/3-3+0.2,2/3-0.2)- -  (2/3-3+0.2,2/3+0.5)- -  (0-3,2/3+0.5)- -  (0-3,2);

\filldraw  [color=black!60, fill=white, very thick] (3,0) circle (2);

\filldraw [gray] (-2+3,0) circle (1pt); 
\filldraw [gray] (-2/3+3,2/3) circle (1pt);    
\filldraw [gray] (2/3+3,2/3) circle (1pt);    
\filldraw [gray] (2/3+3,-2/3) circle (1pt);    
\filldraw [gray] (-2/3+3,-2/3) circle (1pt);

\draw  [color=red!60, thick,  rounded corners=2pt]  (-2+3,0) - -  (-2/3+3-0.5,0)- -  (-2/3+3-0.5,-2/3-0.4)- -  (-2/3+3+0.4,-2/3-0.4)- -  (-2/3+3+0.4,2/3+0.2)- -  (-2/3+3-0.2,2/3+0.2)- -  (-2/3+3-0.2,-2/3-0.1)- -  (-2/3+3+0.1,-2/3-0.1)- -  (-2/3+3+0.1,-2/3+0.1)- -  (-2/3+3-0.1,-2/3+0.1)- -  (-2/3+3-0.1,2/3+0.1)- -  (-2/3+3+0.3,2/3+0.1)- -  (-2/3+3+0.3,-2/3-0.3)- -  (-2/3+3-0.4,-2/3-0.3)- -  (-2/3+3-0.4,2/3+0.4)- -  (0+3,2/3+0.4)- -  (0+3,2);

  \end{tikzpicture}
  }
\caption{}
\label{figure 17}
\end{figure}

On the other hand if we compare the curves $N_1 a^{{l_0}+1}$ and $N_1 a^{l_0}$ (see Figure~20), it is clear that they differ only by strings around $T_4$.

\begin{figure}[ht]
  \centering
  \resizebox{330pt}{!}{%
  \begin{tikzpicture}
  
\filldraw  [color=black!60, fill=white, very thick] (-3,0) circle (2);

\filldraw [gray] (-2-3,0) circle (1pt); 
\filldraw [gray] (-2/3-3,2/3) circle (1pt);    
\filldraw [gray] (2/3-3,2/3) circle (1pt);    
\filldraw [gray] (2/3-3,-2/3) circle (1pt);    
\filldraw [gray] (-2/3-3,-2/3) circle (1pt);

\draw[color=black!60, dashed](-2/3-3,0) ellipse (0.8 and 1.45);

\draw  [color=red!60, thick, rounded corners=2pt] (-1.4-3,0) - - (-2/3-3-0.5,0) - - (-2/3-3-0.5,-2/3-0.4)- - (-2/3-3+0.4,-2/3-0.4)- - (-2/3-3+0.4,-2/3-0.4)- - (-2/3-3+0.4,-2/3-0.4)- - (-2/3-3+0.4,2/3+0.35)- - (-2/3-3-0.35,2/3+0.35)- - (-2/3-3-0.35,-2/3-0.25)- - (-2/3-3+0.25,-2/3-0.25)- - (-2/3-3+0.25,2/3+0.2)- - (-2/3-3-0.2,2/3+0.2)- - (-2/3-3-0.2,0.2) ;
\draw  [color=red!60, thick, rounded corners=2pt] (-2/3-3-0.2,-0.2) - - (-2/3-3-0.2,-2/3-0.115)- - (-2/3-3+0.125,-2/3-0.125)- - (-2/3-3+0.125,2/3+0.075)- - (-2/3-3-0.075,2/3+0.075)- - (-2/3-3-0.075,2/3-0.075)- - (-2/3-3+0.075,2/3-0.075)- - (-2/3-3+0.075,-2/3-0.075)- - (-2/3-3-0.15,-2/3-0.075)- - (-2/3-3-0.15,-0.2);
\draw  [color=red!60, thick, rounded corners=2pt]  (-2/3-3-0.15,0.2) - - (-2/3-3-0.15,2/3+0.15)- - (-2/3-3+0.2,2/3+0.15)- - (-2/3-3+0.2,-2/3-0.2)- - (-2/3-3-0.3,-2/3-0.2)- - (-2/3-3-0.3,2/3+0.3)- - (-2/3-3+0.35,2/3+0.3)- - (-2/3-3+0.35,-2/3-0.35)- - (-2/3-3-0.45,-2/3-0.35)- - (-2/3-3-0.45,0.05)- - (-1.4-3, 0.05);

\filldraw [red] (-2/3-3-0.175,0.1) circle (0.5pt);
\filldraw [red] (-2/3-3-0.175,0) circle (0.5pt);
\filldraw [red] (-2/3-3-0.175,-0.1) circle (0.5pt);

\filldraw  [color=black!60, fill=white, very thick] (3,0) circle (2);

\filldraw [gray] (-2+3,0) circle (1pt); 
\filldraw [gray] (-2/3+3,2/3) circle (1pt);    
\filldraw [gray] (2/3+3,2/3) circle (1pt);    
\filldraw [gray] (2/3+3,-2/3) circle (1pt);    
\filldraw [gray] (-2/3+3,-2/3) circle (1pt);

\draw[color=black!60, dashed](-2/3+3,0) ellipse (0.8 and 1.45);

\draw  [color=red!60, thick, rounded corners=2pt] (-1.4+3,0) - - (-2/3+3-0.5,0) - - (-2/3+3-0.5,-2/3-0.4)- - (-2/3+3+0.4,-2/3-0.4)- - (-2/3+3+0.4,2/3+0.35)- - (-2/3+3-0.35,2/3+0.35)- - (-2/3+3-0.35,-2/3-0.25)- - (-2/3+3+0.25,-2/3-0.25)- - (-2/3+3+0.25,2/3+0.2)- - (-2/3+3-0.2,2/3+0.2)- - (-2/3+3-0.2,0.2) ;
\draw  [color=red!60, thick, rounded corners=2pt] (-2/3+3-0.2,-0.2) - - (-2/3+3-0.2,-2/3-0.075)- - (-2/3+3+0.075,-2/3-0.075)- - (-2/3+3+0.075,-2/3+0.075)- - (-2/3+3-0.15,-2/3+0.075)- - (-2/3+3-0.15,-0.2);
\draw  [color=red!60, thick, rounded corners=2pt]  (-2/3+3-0.15,0.2) - - (-2/3+3-0.15,2/3+0.15)- - (-2/3+3+0.2,2/3+0.15)- - (-2/3+3+0.2,-2/3-0.2)- - (-2/3+3-0.3,-2/3-0.2)- - (-2/3+3-0.3,2/3+0.3)- - (-2/3+3+0.35,2/3+0.3)- - (-2/3+3+0.35,-2/3-0.35)- - (-2/3+3-0.45,-2/3-0.35)- - (-2/3+3-0.45,0.05)- - (-1.4+3, 0.05);

\filldraw [red] (-2/3+3-0.175,0.1) circle (0.5pt);
\filldraw [red] (-2/3+3-0.175,0) circle (0.5pt);
\filldraw [red] (-2/3+3-0.175,-0.1) circle (0.5pt);
  \end{tikzpicture}
  }
  \caption{}
  \label{figure 18}
\end{figure}

Moreover, if we consider the intersections of curves $N_1a^{l_0 +1} $ and $N_1a^{l_0}$ with the strings between the puncture points $p_1$ and $p_4$ as in Figure~21, then corresponding polynomials  up to sign $\epsilon$ and multiplications $t^\alpha$ have the forms:

 \ \

\[\left(\epsilon t^\alpha+S(t,t^{-1})(1-t^{-1})\right)\left(1-t+\dots +{\left(-1\right)}^lt^{l-1}\right),                                (*)\] 
\[\left(\epsilon t^\alpha+S(t,t^{-1})(1-t^{-1})\right)\left(1-t+\dots +{\left(-1\right)}^{l+1}t^{l-2}\right).                           (**)\] 

Therefore their lowest degrees are equal.  Note that, $l_0$ is sufficiently large with respect to the length of $ \sigma $ and so the pictures of curves $N_1a^{l_0 +1} \sigma ^{-1} $ and  $ N_1a^{l_0} \sigma ^{-1} $ `globally' are the same. That means that in some 'local' pictures there are just different numbers of strings. Now we must look at pictures $N_1a^{l_0 +1} \sigma ^{-1}$ and  $N_1a^{l_0} \sigma ^{-1} $ inside a neighborhood of  $T$ as it was done in the previous proof (see Figure~10). Note that by the arguments in the proof of Lemma 4.1 and same `global' picture of the curves $ N_1a^{l_0 +1} \sigma ^{-1} $ and  $N_1 a^{l_0} \sigma ^{-1} $  we have

\[{\rho }_{11}\left(a^3\sigma a^{-l_0 -1}\right)=P'\left(t,t^{-1}\right)\left(1-t^{-1}\right)+Q'\left(t,t^{-1}\right)+R'\left(t,t^{-1}\right)(1-t),\] 
\[{\ \rho }_{31}\left(a^3\sigma a^{-l_0 -1}\right)=-P'\left(t,t^{-1}\right)-Q'\left(t,t^{-1}\right)-R'\left(t,t^{-1}\right),\] 
and for each $m' \in \mathbb{N}$ we have:\\
$\begin{array}{lclll}
{\rho }_{11}\left(a^{3+m'}\sigma a^{-l_0 -1}\right)&=&\hphantom{-}P'\left(t,t^{-1}\right)\left(S_{m'+1}(t^{-1})\right)&+Q'\left(t,t^{-1}\right)\cr
&&+R'\left(t,t^{-1}\right)\left(S_{m'+}(t)\right),\cr
{\rho }_{31}\left(a^{3+m'}\sigma a^{-l_0 -1}\right)&=&-P'\left(t,t^{-1}\right)\left(S_{m'}(t^{-1})\right)&-Q'\left(t,t^{-1}\right)
\cr
&&-R'\left(t,t^{-1}\right)\left(S_{m'+}(t)\right).
\end{array}
$

\begin{figure}[ht]
  \centering
  \resizebox{330pt}{!}{%
  \begin{tikzpicture}
  
\filldraw  [color=black!60, fill=white, very thick] (-3,0) circle (2);

\filldraw [gray] (-2-3,0) circle (1pt); 
\filldraw [gray] (-2/3-3,2/3) circle (1pt);    
\filldraw [gray] (2/3-3,2/3) circle (1pt);    
\filldraw [gray] (2/3-3,-2/3) circle (1pt);    
\filldraw [gray] (-2/3-3,-2/3) circle (1pt);

\draw[color=black!60, dashed](-2/3-3,0) ellipse (0.8 and 1.45);

\draw  [color=red!60, thick, rounded corners=2pt] (-1.4-3,0) - - (-2/3-3-0.5,0) - - (-2/3-3-0.5,-2/3-0.4)- - (-2/3-3+0.4,-2/3-0.4)- - (-2/3-3+0.4,-2/3-0.4)- - (-2/3-3+0.4,-2/3-0.4)- - (-2/3-3+0.4,2/3+0.35)- - (-2/3-3-0.35,2/3+0.35)- - (-2/3-3-0.35,-2/3-0.25)- - (-2/3-3+0.25,-2/3-0.25)- - (-2/3-3+0.25,2/3+0.2)- - (-2/3-3-0.2,2/3+0.2)- - (-2/3-3-0.2,0.2) ;
\draw  [color=red!60, thick, rounded corners=2pt] (-2/3-3-0.2,-0.2) - - (-2/3-3-0.2,-2/3-0.125)- - (-2/3-3+0.125,-2/3-0.125)- - (-2/3-3+0.125,2/3+0.075)- - (-2/3-3-0.075,2/3+0.075)- - (-2/3-3-0.075,2/3-0.075)- - (-2/3-3+0.075,2/3-0.075)- - (-2/3-3+0.075,-2/3-0.075)- - (-2/3-3-0.15,-2/3-0.075)- - (-2/3-3-0.15,-0.2);
\draw  [color=red!60, thick, rounded corners=2pt]  (-2/3-3-0.15,0.2) - - (-2/3-3-0.15,2/3+0.15)- - (-2/3-3+0.2,2/3+0.15)- - (-2/3-3+0.2,-2/3-0.2)- - (-2/3-3-0.3,-2/3-0.2)- - (-2/3-3-0.3,2/3+0.3)- - (-2/3-3+0.35,2/3+0.3)- - (-2/3-3+0.35,-2/3-0.35)- - (-2/3-3-0.45,-2/3-0.35)- - (-2/3-3-0.45,0.05)- - (-1.4-3, 0.05);

\filldraw [red] (-2/3-3-0.175,0.1) circle (0.5pt);
\filldraw [red] (-2/3-3-0.175,0) circle (0.5pt);
\filldraw [red] (-2/3-3-0.175,-0.1) circle (0.5pt);

\draw  [color=black!60,thick] (-2/3-3,2/3) - - (-2/3-3+0.6,2/3);
\draw  [color=black!60, thick] (-2/3-3-0.7,-0.1) - - (-2/3-3+0.7,-0.1);

\filldraw  [color=black!60, fill=white, thick] (3,0) circle (2);

\filldraw [gray] (-2+3,0) circle (1pt); 
\filldraw [gray] (-2/3+3,2/3) circle (1pt);    
\filldraw [gray] (2/3+3,2/3) circle (1pt);    
\filldraw [gray] (2/3+3,-2/3) circle (1pt);    
\filldraw [gray] (-2/3+3,-2/3) circle (1pt);

\draw[color=black!60, dashed](-2/3+3,0) ellipse (0.8 and 1.45);

\draw  [color=red!60, thick, rounded corners=2pt] (-1.4+3,0) - - (-2/3+3-0.5,0) - - (-2/3+3-0.5,-2/3-0.4)- - (-2/3+3+0.4,-2/3-0.4)- - (-2/3+3+0.4,2/3+0.35)- - (-2/3+3-0.35,2/3+0.35)- - (-2/3+3-0.35,-2/3-0.25)- - (-2/3+3+0.25,-2/3-0.25)- - (-2/3+3+0.25,2/3+0.2)- - (-2/3+3-0.2,2/3+0.2)- - (-2/3+3-0.2,0.2) ;
\draw  [color=red!60, thick, rounded corners=2pt] (-2/3+3-0.2,-0.2) - - (-2/3+3-0.2,-2/3-0.075)- - (-2/3+3+0.075,-2/3-0.075)- - (-2/3+3+0.075,-2/3+0.075)- - (-2/3+3-0.15,-2/3+0.075)- - (-2/3+3-0.15,-0.2);
\draw  [color=red!60, thick, rounded corners=2pt]  (-2/3+3-0.15,0.2) - - (-2/3+3-0.15,2/3+0.15)- - (-2/3+3+0.2,2/3+0.15)- - (-2/3+3+0.2,-2/3-0.2)- - (-2/3+3-0.3,-2/3-0.2)- - (-2/3+3-0.3,2/3+0.3)- - (-2/3+3+0.35,2/3+0.3)- - (-2/3+3+0.35,-2/3-0.35)- - (-2/3+3-0.45,-2/3-0.35)- - (-2/3+3-0.45,0.05)- - (-1.4+3, 0.05);

\filldraw [red] (-2/3+3-0.175,0.1) circle (0.5pt);
\filldraw [red] (-2/3+3-0.175,0) circle (0.5pt);
\filldraw [red] (-2/3+3-0.175,-0.1) circle (0.5pt);

\draw  [color=black!60,thick] (-2/3+3,2/3) - - (-2/3+3+0.6,2/3);
\draw  [color=black!60, thick] (-2/3+3-0.7,-0.1) - - (-2/3+3+0.7,-0.1);

  \end{tikzpicture}
  }
  \caption{}
  \label{figure 19}
\end{figure}

Take a large $m_0=3+m'$ such that the difference of lowest degrees of polynomials  ${\rho }_{11} \left( a^{m_0} \sigma a^{-{l_0}} \right)$ and ${\rho }_{31} \left( a^{m_0} \sigma a^{-{l_0}} \right)$ is equal to $-1$ and these lowest degrees come from the lowest degree of the polynomial $P(t,t^{-1})$. By (*) and (**) the polynomials  $P\left(t,t^{-1}\right)$ and $P'\left(t,t^{-1}\right)$, $Q\left(t,t^{-1}\right)$ and $Q'\left(t,t^{-1}\right)$ and also $R\left(t,t^{-1}\right)$ and $R'\left(t,t^{-1}\right)$ have the same lowest degrees and so the same regularity will be true for the polynomials ${\rho }_{11}\left(a^{m}\sigma ba^{-l_0 -1}\right)$ and ${\rho }_{31}\left(a^{m}\sigma a^{-l_0 -1}\right)$. By 
induction on the length of $\sigma$ it will be true for the braid $a^m\sigma a^{-l}, l>l_0$ 
as well.

\textbf{Example 4.} Let $\sigma =\ b^6a\ b^{-1}a^{-1}b^{-6}a^{-6}$. Our aim is to find $n$ which satisfies the conditions of 
Lemma~4.1 and to calculate the corresponding  polynomials $P$, $Q$ and $R$. 
Then we will take any $l>6$ (In our case we consider $l=9$) and will show that lowest degrees of the corresponding polynomials do not change. For the given braid it is difficult to see the picture and write down the polynomials $P$, $Q$ and $R$. Therefore we will use the following method:
If $n$ (in our case $n=2$) is a number as in Lemma~4.1, then we have 

\[{\rho}_{11}\left(\sigma\right)+{\rho }_{31}\left(\sigma\right)=-t^{-1}P(t,t^{-1})-tR(t,t^{-1}),\]
\[{\rho }_{11}\left(a\sigma\right)+{\rho }_{31}\left(a\sigma\right)=t^{-2}P(t,t^{-1})+t^2R(t,t^{-1}).\]
Therefore
\[P(t,t^{-1})=\frac{t({\rho }_{11}(\sigma)+{\rho }_{31}(\sigma))+({\rho }_{11}(a\sigma)+{\rho }_{31}(a\sigma))}{t^{-2}-1} ,\]
\[R(t,t^{-1})= \frac{t^{-1}({\rho }_{11}(\sigma)+{\rho }_{31}(\sigma))+({\rho }_{11}(a\sigma)+{\rho }_{31}(a\sigma))}{t^{2}-1}.\]
In this way we can see that for the braid $\sigma =a^2b^6ab^{-1}a^{-1}b^{-6}a^{-6}$ we have
\[P\left(t,t^{-1}\right)=t^{-8}-3t^{-7}+6t^{-6}-9t^{-5}+11t^{-4}-11t^{-3}+8t^{-2}-2t^{-1}-\] 
\[-7+16t^1-22t^2+23t^3-20t^4+14t^5-5t^6-4t^7+\]
\[+10t^8-12t^9+11t^{10}-9t^{11}+6t^{12}-3t^{13}+t^{14}\]
 
\[Q\left(t,t^{-1}\right)=-t^{-1}+2-2t^1+t^2-2t^4+4t^5-6t^6+6t^7-4t^8+t^9+\]
\[+t^{10}-2t^{11}+3t^{12}-3t^{13}+2t^{14}-t^{15},\] 

\[R\left(t,t^{-1}\right)=-t^{-6}+3t^{-5}-6t^{-4}+9t^{-3}-11t^{-2}+11t^{-1}-\] 
\[-7-t^1+10t^2-18t^3+23t^4-22t^5+17t^6-8t^7-t^8+8t^9-11t^{10}+\]
\[+11t^{11}-9t^{12}+6t^{13}-3t^{14}+t^{15}.\] 

Similarly, for the braid $\sigma' =a^2b^6ab^{-1}a^{-1}b^{-6}a^{-9}$ we obtain

\[P'\left(t,t^{-1}\right)=t^{-8}-3t^{-7}+6t^{-6}-9t^{-5}+11t^{-4}-12t^{-3}+11t^{-2}-8t^{-1}+\] 
\[+1+8t^1-16t^2+21t^3-23t^4+23t^5-19t^6+12t^7-\]
\[-4t^8-3t^9+8t^{10}-11t^{11}+12t^{12}-11t^{13}+9t^{14}-6t^{15}+3t^{16}-t^{17}\]

\[Q'\left(t,t^{-1}\right)=-t^{-1}+2-2t^1+t^2-t^4+2t^5-4t^6+6t^7-6t^8+4t^9-\]
\[-2t^{10}+t^{11}-t^{13}+2t^{14}-3t^{15}+3t^{16}-2t^{17}+t^{18},\] 

\[R'\left(t,t^{-1}\right)=-t^{-6}+3t^{-5}-6t^{-4}+9t^{-3}-11t^{-2}+12t^{-1}-\] 
\[-10+5t^1+2t^2-10t^3+17t^4-21t^5+22t^6-20t^7+14t^8-7t^9+\]
\[+5t^{11}-9t^{12}+11t^{13}-11t^{14}+9t^{15}-6t^{16}+3t^{17}-t^{18}.\] 
Therefore the lowest degrees of polynomials $P\left(t,t^{-1}\right)$ and 
$P'\left(t,t^{-1}\right)$ (same situation is with the polynomials 
$Q\left(t,t^{-1}\right)$ and $Q'\left(t,t^{-1}\right)$ or $R\left(t,t^{-1}\right)$ 
and $R'\left(t,t^{-1}\right)$) are equal.
\noindent 

\begin{theorem} Conjecture~4.2 implies faithfulness of the Burau representation for  $n=4$ .\end{theorem}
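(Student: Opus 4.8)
The plan is to argue by contradiction: suppose $\sigma$ is a non--trivial braid with $\rho_4(\sigma)=I$, and derive from Conjecture~4.2 a property such a $\sigma$ cannot have. First I would record the structure forced on $\sigma$. By Proposition~3.1 and Corollary~3.2 it lies in $\ker\varphi=F(a,b)$ and is a non--trivial reduced word in $a,b,a^{-1},b^{-1}$. It is also a pure braid, since $\ker\rho_4$ is classically contained in the pure braid group $P_4$: the specialisation $t=1$ of the reduced Burau representation factors through $B_4\to S_4$, and the resulting $3$--dimensional representation of $S_4$ is faithful. Finally $\sigma$ is not a power of the half--twist $\Delta$: the full twist $\Delta^2$ generates the centre of $B_4$, so $\rho_4(\Delta^2)$ is a non--trivial scalar matrix, whence $\Delta^{2k}\notin\ker\rho_4$ for $k\neq0$, while $\Delta^{2k+1}$ induces the reversal permutation on the punctures and so is not pure. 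Thus $\sigma$ already satisfies every hypothesis of Conjecture~4.2 except, possibly, that of acting non--trivially on $T_4$.

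To secure that last hypothesis I would pass to a conjugate. Both $\ker\rho_4$ and $P_4$ are normal in $B_4$, so for every $\tau\in B_4$ the braid $\tau\sigma\tau^{-1}$ is again a non--trivial pure braid in $\ker\rho_4$, hence again neither trivial nor a power of $\Delta$, and (since $\ker\rho_4\subseteq\ker\varphi$) again a word in the Bokut--Vesnin generators. Viewing $B_4$ as the mapping class group of $D_4$, a pure braid fixing the isotopy class of every embedded arc joining two punctures must fix a system of such arcs filling $D_4$, and therefore equals a power of the Dehn twist about a curve parallel to $\partial D_4$, i.e.\ a power of $\Delta^2$; since $\sigma$ is neither trivial nor a power of $\Delta$, there is an embedded puncture--to--puncture arc $\alpha$ with $\sigma(\alpha)\not\simeq\alpha$. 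By the change--of--coordinates principle $B_4$ acts transitively on the isotopy classes of such arcs, so $\tau(\alpha)\simeq T_4$ for a suitable $\tau\in B_4$, and then $\tau\sigma\tau^{-1}$ acts non--trivially on $T_4$. Replacing $\sigma$ by $\tau\sigma\tau^{-1}$, we may assume $\sigma$ meets all hypotheses of Conjecture~4.2.

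Now I would invoke the Conjecture: there exist $l_0,m_0\in\mathbb{N}$ such that for all $m>m_0$ and $l\ge l_0$ the Laurent polynomials $\rho_{11}(a^m\sigma a^{-l})$ and $\rho_{31}(a^m\sigma a^{-l})$ are both non--zero. Put $N=\max\{m_0+1,\,l_0\}$ and take $m=l=N$. Since $\rho_4(\sigma)=I$,
\[
\rho_4\bigl(a^{N}\sigma a^{-N}\bigr)=\rho_4(a)^{N}\,\rho_4(\sigma)\,\rho_4(a)^{-N}=\rho_4(a)^{N}\rho_4(a)^{-N}=I,
\]
so $\rho_{31}(a^{N}\sigma a^{-N})=0$, contradicting the statement just quoted. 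Hence $\ker\rho_4$ is trivial, i.e.\ the Burau representation is faithful for $n=4$. The step I expect to be the main obstacle is the reduction carried out in the second paragraph: one must make precise that ``$\sigma$ acts non--trivially on $T_4$'' in Conjecture~4.2 means exactly $\sigma(T_4)\not\simeq T_4$, and then verify carefully both that a pure braid which is not a power of $\Delta$ moves some puncture--to--puncture arc and that all such arcs form a single $B_4$--orbit; the remaining ingredients ($\ker\rho_4\subseteq P_4$, the scalar value of $\rho_4(\Delta^2)$, and the invertibility of $\rho_4(a)$ over $\mathbb{Z}[t,t^{-1}]$) are standard or can be read off from the matrices in Section~3.
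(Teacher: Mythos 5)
Your proof is correct and follows essentially the same strategy as the paper's: reduce by conjugation to a non-trivial kernel element acting non-trivially on $T_4$, then apply Conjecture~4.2 with $m=l$ to force $\rho_{31}(a^{m}\sigma a^{-m})\neq 0$, contradicting membership in the kernel. You are in fact somewhat more careful than the paper, which never explicitly verifies that a kernel element is a pure braid (a hypothesis of the Conjecture) and performs the reduction to $T_4$ only by rotating the disc; your $t=1$ specialisation argument and the change-of-coordinates conjugation fill in these details cleanly.
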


\begin{proof} Let us consider a nontrivial braid $\sigma \in B_4$ written as a reduced word in the Bokut--Vesnin generators. We may assume that it begins and ends with $a$ or $a^{-1}$ 
(otherwise we will conjugate by a suitable power of $a$). 

If we interpret the braid group as the mapping class group, then there is a natural induced action on the set of isotopy classes of forks. Let $\sigma $ act non--trivially on $T_4$. Then by Lemma~4.1 it is possible to find sufficiently large $l_0$ with respect to the length of $\sigma $ and sufficiently large $m_0$, such that for each $m > m_0$ and $l >l_0$ the difference of 
lowest degrees of the polynomials ${\rho }_{11}(a^m\sigma a^{-l})$
and ${\rho }_{31}(a^m \sigma a^{-l})$ is equal to $-1$ and the polynomials
are both non-zero. In particular, we can assume that $m=l$ and so 
${\rho }_{11}(a^m \sigma a^{-m})$ and ${\rho }_{31}(a^m \sigma a^{-m})$ are both non-zero
which contradicts the assumption that 
$a^m \sigma a^{-m}\in \ker\rho$.

The general case (when we do not assume that $\sigma$ acts non--trivially on $T_4$) is easily reduced to the one discussed above. The reason is that if $\sigma$ acts trivially on all four segments, then $\sigma$ is a power of $\Delta$ which is not possible if $\sigma$ is a product of the Bokut--Vesnin generators. And if $\sigma$ acts non--trivially on at least one of the four segments, then we can rotate the whole disc to make the action non--trivial for $T_4$.

\end{proof}

\begin{remark} We have a C\texttt{++} program checking whether  our regularity works or not for randomly generated examples. We calculated millions of examples and the regularity was always confirmed. In fact we considered examples of type 
$a^3b^3 w b^{-3}a^{-3}$, where $a^3b^3 w b^{-3}a^{-3}$ is a reduced word in the Bokut--Vesnin generators. Such a version of Proposition 3.1 is sufficient for the Burau representation faithfulness problem.
\end{remark}

\section*{Acknowledgements:} Most of this research was conducted while the first author was a postdoc at the University of Warsaw during the Spring 2014 semester, with support of Erasmus Mundus Project (WEBB).

\noindent 
\begin{center}
 REFERENCES
\end{center}

\begin{enumerate}
\item  \textbf{Joan S Birman.} Braids, links, and mapping class groups. \textit{Annals of Mathematics Studies, No. 82, Princeton University Press, Princeton, NJ (1974)}\textbf{\textit{}}

\item \textbf{Stephen Bigelow.} The Burau representation is not faithful for $n=5$. \textit{Geom. Topol.} 3 (1999), 397-404

\item  \textbf{Stephen Bigelow.} Does the Jones polynomial detect the unknot? \textit{J.~Knot Theory and Ramifications \textit{(4)} 11 (2002), 493-505}

\item \textbf{Leonid Bokut and Andrei Vesnin.} New rewriting system for the braid group $B_4$.  \textit{ in: Proceedings of Symposium in honor of Bruno Buchberger's 60th birthday "Logic, Mathematics and Computer Sciences: Intersections", Research Institute for Symbolic Computations, Linz, Austia, 2002, Report Series No. 02-60, 48-60 }

\item \textbf{Matthieu Calvez and Tetsuya Ito.} Garside-theoretic analysis of Burau representations. \textit{ arXiv:1401.2677v2 }

\item \textbf{D. D.Long and M. Paton.} The Burau representation is not faithful for 
$n \geq 6$. \textit{Topology}\ \textbf{32 }(1993), no. 2, 439---447\textbf{\textit{}}

\item \textbf{John Atwell  Moody.} The Burau representation of the braid group $B_n$ is unfaithful for large $ n$.  \textit{Bull. Amer. Math. Soc. (N.S.)} \textbf{25 }(1991) no. 2, 379--384\textbf{\textit{}}

\end{enumerate}
\noindent \textbf{\textit{}}
\noindent 

\noindent Authors' addresses:

\noindent Anzor Beridze,

\noindent Department of Mathematics,

\noindent Batumi ShotaRustaveli State University,

\noindent 35, Ninoshvili St., Batumi 6010,

\noindent Georgia

\noindent a.beridze@bsu.edu.ge;

\ \
\ \

\noindent Pawel Traczyk,

\noindent Institute of Mathematics,

\noindent University of Warsaw,

\noindent Banacha 2, 02-097 Warszawa,

\noindent Poland

\noindent traczyk@mimuw.edu.pl
\end{document}